\documentclass[12pt]{amsart}
\usepackage[usenames]{color} 
\usepackage{amsmath,amsfonts,amsthm}   
\usepackage[top=1in, bottom=1in, left=1.3in, right=1.3in]{geometry}
\usepackage{mathrsfs} 
\usepackage{mathabx}
\usepackage[colorlinks=true, linkcolor=blue, urlcolor=blue, pagebackref=true]{hyperref}
\usepackage{syntonly} 
\usepackage{stmaryrd}
\usepackage[mathcal]{euscript} 
\usepackage{setspace}

\setstretch{1.05}
\usepackage[T1]{fontenc}




\newcommand{\enote}[1]{#1}

\renewcommand{\th}{\text{th}}

\newcommand{\set}[1]{\left\{#1\right\}}
\renewcommand{\(}{\left(}
\renewcommand{\)}{\right)}


\newcommand{\up}[1]{\left\lceil #1 \right\rceil}
\newcommand{\down}[1]{\left\lfloor #1 \right\rfloor}
\newcommand{\tr}[2]{\left \langle {#1} \right \rangle_{#2}}

\newcommand{\NN}{\mathbb{N}}

\newcommand{\FF}{\mathbb{F}}
\newcommand{\QQ}{\mathbb{Q}}
\newcommand{\RR}{\mathbb{R}}

\newcommand{\m}{\mathfrak{m}}

\newcommand{\Length}{\operatorname{length}}

\newcommand{\Jac}{\hbox{\rm{Jac}}}

\newcommand{\fpt}[1]{\boldsymbol{\operatorname{fpt}}(#1)}

\newcommand{\bracket}[2]{{#1}^{[p^{#2}]}}

\theoremstyle{definition}
\newtheorem{theorem}{Theorem}[section]
\newtheorem*{MainThm}{Theorem}

\newtheorem{corollary}[theorem]{Corollary}
\newtheorem{lemma}[theorem]{Lemma}
\newtheorem{proposition}[theorem]{Proposition}

\newtheorem{notation}[theorem]{Notation}

\theoremstyle{definition}
\newtheorem{definition}[theorem]{Definition}

\newtheorem{example}[theorem]{Example}
\newtheorem{conjecture}[theorem]{Conjecture}
\newtheorem{alg}[theorem]{Algorithm}
\newtheorem{remark}[theorem]{Remark}
\numberwithin{equation}{subsection}
 

\newcommand{\ft}[2]{\boldsymbol{\operatorname{c}}^{#2}(#1)}
\newcommand{\jnset}[1]{\mathbb{J}(#1)}

\newcommand{\testR}[3]{\tau(#1 \, ,{#2}^{#3})}
\newcommand{\test}[2]{\tau({#1}^{#2})}
\newcommand{\displaytest}[2]{\tau\({#1}^{#2}\)}
\newcommand{\size}[1]{\vphantom{-}^{\#}#1}
\newcommand{\error}{\varepsilon}
\newcommand{\ideala}{\mathfrak{a}}
\newcommand{\idealb}{\mathfrak{b}}
\renewcommand{\bracket}[2]{{#1}^{\left[ p^{#2} \right]}}
\newcommand{\ibracket}[2]{{\(#1\)}^{\left[1/{p^{#2}} \right]}}

 \newcommand{\quot}[2]{ {#1} \hspace{.01in} / \hspace{.025in} {#2} }

 \newcommand{\dtf}[1]{{\slshape #1}}


\AtBeginDocument{%
   \def\MR#1{}
}

\begin{document}
\title[Local $\m$-adic constancy of $F$-pure thresholds and test ideals]{Local $\m$-adic constancy of $F$-pure thresholds \\ and test ideals }
\author{Daniel J.\- Hern\'andez}
\author{ Luis N\'u\~nez-Betancourt}
\author{Emily E.\- Witt}
\maketitle

\begin{abstract}
In this note, we consider a corollary of the ACC conjecture for $F$-pure thresholds.  Specifically, we show that the $F$-pure threshold (and more generally, the test ideals) associated to a polynomial with an isolated singularity are locally constant in the $\m$-adic topology of the corresponding local ring.  As a by-product of our methods, we also describe a  simple algorithm for computing all of the $F$-jumping numbers and test ideals associated to an arbitrary polynomial over an $F$-finite field.
\end{abstract}

\section{Introduction}
Given a regular ring $R$ of prime characteristic, the $F$-pure threshold of an element $f$ of $R$, denoted $\fpt{f}$, is a numerical invariant defined via the Frobenius endomorphism, and may be thought of as a measure of how ``far'' the hypersurface $\mathbb{V}(f) \subseteq \operatorname{Spec}(R)$ is from being $F$-pure \cite{TW2004}.  In this article, we will often focus on the concrete case that $R$ is a (localization of) a polynomial ring over $\mathbb{F}_p$.  In this restricted setting, the $F$-pure threshold may also be considered as the prime characteristic analog of the so-called log canonical threshold, an important invariant measuring the singularities of hypersurfaces over fields of characteristic zero (see, e.g., the survey \cite{BFS2013}).  Motivated by these connections, it is expected that $F$-pure and log canonical thresholds satisfy similar properties.  

One particularly important property of log canonical thresholds is that they satisfy the ascending chain condition (ACC).   In the simplest context, this condition states that given any sequence of polynomials in $\mathbb{C}[x_1, \cdots, x_n]$, the associated sequence of log canonical thresholds contains no strictly increasing subsequence.  The assertion that log canonical thresholds satisfy ACC was first conjectured by Shokurov \cite{ShokurovACC}, and was shown to hold in many cases (see, e.g., \cite{dfEM2010, dfEM2011}) before being completely solved in \cite{ACCLCT}.  We now recall the characteristic $p$ analogue of this conjecture \cite[Conjecture 4.4]{BMS-Hyp}.

\begin{conjecture}[ACC for $F$-pure thresholds]\label{Conj ACC}
Given any regular ring $R$ of prime characteristic, and any sequence of elements of $R$, \enote{the associated sequence of $F$-pure thresholds is not strictly increasing.}
\end{conjecture}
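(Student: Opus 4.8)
It should be said at the outset that Conjecture~\ref{Conj ACC} remains open in the stated generality; the plan below is the strategy one pursues, together with the point where it runs aground. The first phase is a sequence of standard reductions. The $F$-pure threshold can be computed after localizing, is unchanged by completion, and behaves well under faithfully flat base change with regular fibres; using these together with the Cohen structure theorem, one reduces to the case $R = k[[x_1, \dots, x_n]]$ with $k$ an $F$-finite field of characteristic $p$, and discards the units (which contribute nothing), so every $f_i$ lies in the maximal ideal $\m$. In this setting one has the explicit description $\fpt{f} = \sup_e \new{f}{}{e}/p^e = \lim_e \new{f}{}{e}/p^e$, where $\new{f}{}{e} = \max\{\, t \ge 0 : f^t \notin \bracket{\m}{e}\,\}$. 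Since $f^{p^e} \in \bracket{\m}{e}$ one gets $\new{f}{}{e} \le p^e - 1$, and since $p\,\new{f}{}{e} \le \new{f}{}{e+1} \le p\,\new{f}{}{e} + (p-1)$ the sequence $\new{f}{}{e}/p^e$ is nondecreasing with
\[
\frac{\new{f}{}{e}}{p^e} \;\le\; \fpt{f} \;\le\; \frac{\new{f}{}{e}+1}{p^e};
\]
in particular $0 < \fpt{f} \le 1$, and the integer $\new{f}{}{e}$ pins down $\fpt{f}$ to within $1/p^e$.

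The second phase attacks a hypothetical strictly increasing sequence $\fpt{f_1} < \fpt{f_2} < \cdots$. As $p$ is fixed, $\new{f_i}{}{1}$ takes values in the finite set $\{0, \dots, p-1\}$, so after passing to a subsequence it is constant; iterating over $e = 2, 3, \dots$ and diagonalizing, one obtains a subsequence along which $\new{f_i}{}{e}$ is, for each fixed $e$, eventually constant, say $=c_e$. The squeeze above then forces $\fpt{f_i} \to \lim_e c_e/p^e$, but this is not yet a contradiction: a strictly increasing bounded sequence is perfectly consistent with convergence. The genuine content is to rule out that such data $(c_e)_e$ can be realised by infinitely many elements with \emph{distinct} thresholds, and for this one needs a boundedness input --- for instance, a bound depending only on $n$ after which $\new{f}{}{e}/p^e$ already equals $\fpt{f}$, or a suitable semicontinuity statement for $f \mapsto \fpt{f}$ on a parameter space. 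When $n$ is small (e.g.\ for plane curves) such control is available from explicit formulas for $F$-pure thresholds, and there the conjecture is within reach.

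The main obstacle is precisely the passage to large ambient dimension --- and, within one infinite-dimensional $R$, to elements involving ever more variables. The characteristic-zero analogue, ACC for log canonical thresholds \cite{ACCLCT}, is proved via deep boundedness theorems for the singularities in play, and no counterpart of that boundedness is presently known for $F$-singularities. Nor can one simply import the characteristic-zero result: Conjecture~\ref{Conj ACC} lives in a \emph{fixed} positive characteristic, so ``reduction to characteristic zero'' is not available, and in any case the comparison between $\fpt{f}$ and the log canonical threshold of a characteristic-zero lift of $f$ is in general only an inequality. A proof in full generality thus appears to require a genuinely new uniform bound on the invariants $\new{f}{}{e}$, independent of the number of variables, or a new structural relation among the $F$-pure thresholds of different hypersurfaces. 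Absent either, one settles --- as in this article --- for deducing consequences of the conjecture, such as $\m$-adic local constancy, conditionally in general and, in favorable cases like isolated singularities, by direct arguments that bypass ACC altogether.
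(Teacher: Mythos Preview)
You have correctly identified that the statement is an open conjecture, not a theorem, and the paper does not attempt to prove it; indeed, the paper says explicitly that Conjecture~\ref{Conj ACC} ``is essentially open (even when $R$ is a polynomial ring), and is expected to be difficult to solve.'' There is therefore no proof in the paper to compare your proposal against, and your write-up is not a proof either but an honest account of the standard reductions and the point at which they stall. That account is accurate and in keeping with the paper's own stance: the paper bypasses ACC entirely and instead establishes, for isolated singularities, one of its expected consequences (local $\m$-adic constancy of $F$-pure thresholds and test ideals) by direct arguments based on bounds for $F$-thresholds and the combinatorics of base-$p$ expansions.
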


Conjecture \ref{Conj ACC} is essentially open (even when $R$ is a polynomial ring), and is expected to be difficult to solve; for a positive result corresponding to a very specialized case, see \cite[Proposition 7.3]{QuasiHomog}. In what follows, we note one particular consequence of Conjecture \ref{Conj ACC}:  Fix an element $f$ of a regular local ring $(R, \m)$ of prime characteristic.  Given arbitrary elements $h_k \in \m^k$, it is easy to see (e.g., see \cite[Corollary 3.4]{BMS-Hyp}) that $\fpt{f+ h_k} \to \fpt{f}$ as $k \to \infty$.  Assuming Conjecture \ref{Conj ACC}, the terms in this sequence must be eventually greater than or equal to $\fpt{f}$\footnote{This observation was first recorded in \cite[Remark 4.5]{BMS-Hyp}. Note that the (incorrect) inequality $\fpt{f} \geq \fpt{f+h}$ appearing therein should be reversed.} (since otherwise, there would be infinitely many terms strictly less than $\fpt{f}$, and hence, there would exist a strictly increasing subsequence converging to $\fpt{f}$).  In summary, Conjecture \ref{Conj ACC} predicts the following.

\begin{conjecture}
\label{semicont: C}
If $(R,\m)$ is a regular local ring of prime characteristic, and $f$ is any element of $\m$, then there exists an integer $N>0$ such that $\fpt{f} \leq \fpt {f + h}$ for every  $h \in \m^N$. 
\end{conjecture}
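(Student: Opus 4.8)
The statement follows from Conjecture~\ref{Conj ACC}, exactly as extracted above, so a proof valid for arbitrary $f$ is not presently within reach; the plan is to prove it unconditionally under the additional hypothesis treated in this paper, namely that $f$ has an \emph{isolated} singularity at $\m$ --- meaning that $\mathfrak{J}(f) + (f)$ is $\m$-primary, where $\mathfrak{J}(f) := \bigl(\partial f/\partial x_{1}, \dots, \partial f/\partial x_{d}\bigr)$. In fact I would obtain the sharper conclusion that $\fpt{f} = \fpt{f+h}$, with every test ideal $\tau(f^{t})$ unchanged, for all $h$ in a suitable power of $\m$.

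First I would pass to the completion $\widehat{R}$, which changes neither $F$-pure thresholds nor test ideals of a regular $F$-finite local ring, and dispose of the trivial case $f \notin \m^{2}$, where $\fpt{f} = 1 = \fpt{f+h}$ for all $h \in \m^{2}$. So assume $R = k[[x_{1}, \dots, x_{d}]]$ with $\m = (x_{1}, \dots, x_{d})$ and $f \in \m^{2}$. The decisive input is the finite determinacy theorem for isolated hypersurface singularities, in a form valid over a field of arbitrary characteristic: the tangent space at $f$ to its orbit under the contact group (ring automorphisms of $R$, together with multiplication by units) equals $\m\,\mathfrak{J}(f) + (f)$, which is $\m$-primary by hypothesis and hence contains some $\m^{N}$; a formal Newton-iteration argument then upgrades this infinitesimal fact to the statement that every $g \in f + \m^{N}$ is \emph{contact equivalent} to $f$, i.e.\ $g = u\cdot\varphi(f)$ for some automorphism $\varphi$ of $R$ and some unit $u \in R^{\times}$. (En route, a short Nakayama argument shows that $f+h$ again has an isolated singularity governed by the same $\m^{N}$, though this is already subsumed in the contact equivalence.)

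To finish, note that $\fpt{-}$ and the test ideals $\tau\bigl((-)^{t}\bigr)$ of a principal ideal depend only on the ideal generated, and so are invariant under a ring automorphism of $(R,\m)$ and under multiplication by a unit. Hence, for every $h \in \m^{N}$,
\[
\fpt{f+h} \;=\; \fpt{u\cdot\varphi(f)} \;=\; \fpt{\varphi(f)} \;=\; \fpt{f}
\qquad\text{and}\qquad
\tau\bigl((f+h)^{t}\bigr) \;=\; \tau\bigl(f^{t}\bigr) \quad (t \geq 0),
\]
which gives the required $\fpt{f} \leq \fpt{f+h}$, in fact with equality.

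The heart of the matter --- and the only genuine obstacle --- is the finite determinacy step. Over $\mathbb{C}$ it is classical (Mather, Tougeron), but the standard proof integrates a vector field, hence divides by integers, and so does not transfer verbatim to characteristic $p$; a characteristic-free treatment for isolated hypersurface singularities is available in the literature, but one must check that the contact-equivalence conclusion and an explicit determinacy bound truly persist in positive characteristic --- and it is exactly this that ties the result to the isolated-singularity hypothesis rather than to the full conjecture. I would also remark that the naive characteristic-$p$ alternative does not suffice: using $\tau(g^{t}) = \ibracket{g^{\up{t p^{e}}}}{e}$ for $e \gg 0$ together with the congruence $(f+h)^{n} \equiv f^{n} \pmod{\bracket{\m}{e}}$ valid for all $n$ once $h \in \m^{\,d(p^{e}-1)+1}$, one obtains only that $\tau(f^{t})$ and $\tau\bigl((f+h)^{t}\bigr)$ agree modulo $\m$ --- which recovers the known convergence $\fpt{f+h_{k}} \to \fpt{f}$ but not the one-sided bound, since the Frobenius level needed to witness $\tau(f^{t}) = R$ for $t$ just below $\fpt{f}$ is in general unbounded (already for a cusp in characteristic $2$, where $\fpt{f}$ has denominator a power of $p$). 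So the real content lies in the coordinate-change step.
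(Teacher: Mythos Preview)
Your approach via finite determinacy is genuinely different from the paper's, and it is essentially correct for the $F$-pure threshold (and, more generally, for the set of $F$-jumping numbers).  The paper never invokes contact equivalence; in the isolated-singularity case (Theorem~\ref{ThmSC}) it argues entirely within $F$-singularity theory: it bounds the number of $F$-jumping numbers of both $f$ and $f+h$ in $[0,1)$ by $\ell = \Length_R R/\Jac(f)$ (Lemma~\ref{boundSum: L}), uses Proposition~\ref{Shape} to confine all these jumping numbers to a set $\mathscr{A}_\ell$ whose distinct elements differ by more than $p^{-2\ell}$ (Remark~\ref{LemDifference}), and then appeals to the continuity estimate $\lvert \fpt{f}-\fpt{f+h}\rvert \leq \dim(R)/N$ from Lemma~\ref{Lem Ineq} to force equality once $N \geq p^{2\ell}\dim(R)$.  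Your route buys a much smaller, characteristic-independent bound on $N$ (on the order of $\ell$) and a clean conceptual picture; the paper's route is self-contained within prime characteristic methods, yields explicit $p$-dependent constants, and sidesteps the positive-characteristic determinacy theorem you rightly flag as the crux (for which the relevant reference is Boubakri--Greuel--Markwig).

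One correction: contact equivalence yields $\tau\bigl((f+h)^{t}\bigr) = \varphi\bigl(\tau(f^{t})\bigr)$, not $\tau(f^{t})$ itself, since an automorphism of $(R,\m)$ need not fix an arbitrary $\m$-primary ideal.  This is harmless for $\fpt{-}$ and for the set of $F$-jumping numbers (hence for the conjecture as stated), but it does \emph{not} recover the literal equality of test ideals asserted in Theorem~\ref{ThmSC-Ext}; for that stronger conclusion the paper's argument via $F$-thresholds and Lemma~\ref{FTTI: L} appears to be genuinely needed.
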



Conjecture \ref{semicont: C} is known to be true for the image of a homogeneous polynomial $f$ inside of the local ring at the origin (in this case, the integer $N=\deg(f) + 1$ suffices) \cite[Proposition 7.4]{QuasiHomog}. However, in the absence of such homogeneity, verifying Conjecture \ref{semicont: C} appears to be a much more subtle issue.  

On a related note, we briefly switch gears, recalling the notion of test ideals.  Given a regular ring $R$ of prime characteristic and an element $f \in R$, one may use the Frobenius endomorphism to construct a family of ideals $\test{f}{\lambda}$ of $R$, parameterized by a non-negative real parameter $\lambda$, called the (generalized) {test ideals} of $f$.  For more on test ideals, the reader may refer to \cite{H-Y, BMS-MMJ}, as well as to the many references (e.g., \cite{HoHu1}) cited therein.  The relevance of test ideals to the present discussion stems from the fact that $\fpt{f}$ may be described in terms of the behavior of $\test{f}{\lambda}$ as the parameter $\lambda$ varies; specifically,  the $F$-pure threshold is the smallest positive parameter $\lambda$ for which $\test{f}{\lambda}$ is not the trivial ideal (see, e.g., Remark \ref{FPT: Rmk}).  More generally, as $\lambda$ varies, the values of the test ideals $\test{f}{\lambda}$ allow us to define a sequence of numerical invariants called $F$-jumping numbers, and the $F$-pure threshold is the smallest positive member of this family.  

In the main results of this paper, Theorems \ref{ThmSC} and \ref{ThmSC-Ext}, we show that stronger versions of Conjecture \ref{semicont: C} (i.e., analogous statements involving $F$-pure thresholds, test ideals, and higher jumping numbers) hold whenever $f$ is a polynomial with an isolated singularity.  Recall that a field $\mathbb{L}$ of characteristic $p>0$ is called $F$-finite if $\mathbb{L}^p \subseteq \mathbb{L}$ is a finite field extension.

%
%



\begin{MainThm}[cf.\ Theorems \ref{ThmSC} and \ref{ThmSC-Ext}] 
Let $R$ be a polynomial ring over an $F$-finite field of prime characteristic $p>0$, and let $\m$ denote the maximal ideal of $R$ generated by the variables.  If $f \in \m$ is such that $\Jac(f)$, the ideal of $R$ generated by $f$ and all of its partial derivatives, is primary to $\m$, then there exist integers $N < M$ with the following properties:
\begin{enumerate}
\item After localizing at the origin, $\fpt{f}= \fpt{f+h}$ for every $h \in \m^N$.
\item After localizing at the origin, $\test{f}{\lambda} = \test{(f+h)}{\lambda} $ for every $h \in \m^M$ and every $0 \leq \lambda < 1$.
In particular, after localizing at the origin, $f$ and $f+h$ have the same $F$-jumping numbers.
\end{enumerate} 
\end{MainThm}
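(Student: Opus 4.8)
The plan is to deduce each assertion from a corresponding \emph{uniform $\m$-adic stability} of the level-$e$ approximations of the invariants, and to establish that stability by playing the hypothesis $\m^{s}\subseteq\Jac(f)$ (fix such an $s$) against the standard fact that for any ideal $J$ of $R$ the bracket power $\bracket{J}{e}$ is stable under every differential operator of $R$ that is linear over $R^{p^{e}}$ --- in particular under $\partial_{x_{i}}$ and, for $t<e$, under the divided-power operators $\partial_{x_{i}}^{(p^{t})}$. A preliminary reduction serves both parts. For $h$ in a sufficiently high power of $\m$ the partial derivatives of $h$ lie in $\m\cdot\Jac(f)$, so $\Jac(f+h)\subseteq\Jac(f)+\m\Jac(f)$ and, symmetrically, $\Jac(f)\subseteq\Jac(f+h)+\m\Jac(f+h)$; by Nakayama $\Jac(f+h)=\Jac(f)$, and hence the whole hypothesis is symmetric under $f\leftrightarrow f+h$. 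The inclusion $\Jac(f)\subseteq\m^{\operatorname{mult}(f)-1}$ forces $\operatorname{mult}(f)\le s+1$, whence $\nu_{f}(p^{e}):=\max\{a:f^{a}\notin\bracket{\m}{e}\}\ge p^{e}/(s+1)-1$ for every $e$, because an appropriate power of the leading form of $f$ is a nonzero form of degree $<p^{e}$ and so avoids $\bracket{\m}{e}$.

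For part (1): since $\fpt{f}=\lim_{e}\nu_{f}(p^{e})/p^{e}$, it suffices to produce $N$ with $\nu_{f+h}(p^{e})=\nu_{f}(p^{e})$ for all $e$ and all $h\in\m^{N}$, and by the symmetry above it is enough to prove that $(f+h)^{a}\in\bracket{\m}{e}$ whenever $f^{a}\in\bracket{\m}{e}$, i.e.\ that every term $\binom{a}{b}f^{a-b}h^{b}$ of $(f+h)^{a}-f^{a}$ lies in $\bracket{\m}{e}$. The terms with $b$ large enough that $\m^{bN}\subseteq\bracket{\m}{e}$ already are handled directly (this is where $a\ge p^{e}/(s+1)$ is used, so that the ``trivial'' range of $b$ and the range treated below together exhaust $\{1,\dots,a\}$). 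For the remaining $b$ one proves, by induction descending on the exponent of $f$, that $f^{a-b}\cdot\m^{cb}\subseteq\bracket{\m}{e}$, where $c$ depends only on $n$ and $s$: from $f^{a}\in\bracket{\m}{e}$ one applies $\partial_{x_{i}}$, using $\partial_{x_{i}}(f^{m}w)=mf^{m-1}(\partial_{x_{i}}f)w+f^{m}\partial_{x_{i}}(w)$ to trade a power of $f$ for a generator of $\Jac(f)$, and then refills the $\m$-adic order lost to the second Leibniz term out of the reservoir supplied by $\m^{s}\subseteq\Jac(f)$; choosing $N\ge c$ makes $h^{b}\in\m^{bN}\subseteq\m^{cb}$, so every term is caught. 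Thus $\nu_{f+h}(p^{e})=\nu_{f}(p^{e})$ for all $e$, giving $\fpt{f}=\fpt{f+h}$.

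For part (2): by $\test{f}{\lambda}=\bigcup_{e}\ibracket{f^{\up{\lambda p^{e}}}}{e}$ (an ascending union that stabilizes), it suffices to produce $M$ with $\ibracket{f^{\up{\lambda p^{e}}}}{e}=\ibracket{(f+h)^{\up{\lambda p^{e}}}}{e}$ for all $e\gg0$, all $0\le\lambda<1$, and all $h\in\m^{M}$; equality of this whole family then forces $f$ and $f+h$ to share all $F$-jumping numbers below $1$. One invokes the identity ``$g\equiv g'\pmod{\bracket{J}{e}}$ implies $\ibracket{g}{e}+J=\ibracket{g'}{e}+J$'' with $J=\test{f}{\lambda}$, which by the isolated-singularity hypothesis is $\m$-primary (away from the origin $\Jac(f)$ is the unit ideal, so the test ideal is trivial there). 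Hence it is enough to prove $(f+h)^{\up{\lambda p^{e}}}-f^{\up{\lambda p^{e}}}\in\bracket{\test{f}{\lambda}}{e}$, which is exactly the estimate of part (1) with $\bracket{\m}{e}$ replaced by $\bracket{J}{e}$ for the $\m$-primary ideal $J=\test{f}{\lambda}$; now the $\m$-adic order must be refilled up to a power of $\m$ contained in $J$, not merely to $\m$ itself, which is why $M$ is taken larger than $N$. Combining this with stabilization and the $f\leftrightarrow f+h$ symmetry yields $\test{(f+h)}{\lambda}\subseteq\test{f}{\lambda}$, hence equality, for every $0\le\lambda<1$.

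The hard part will be the differential-operator estimate of the second paragraph in the case where the exponent of $f$ being lowered is divisible by $p$: there $\partial_{x_{i}}$ annihilates the relevant power, and one must instead feed in the divided-power operators $\partial_{x_{i}}^{(p^{t})}$ together with the ``freshman's dream'' $(u+v)^{p^{t}}=u^{p^{t}}+v^{p^{t}}$ and the containment $h^{p^{t}}\in\bracket{\Jac(f)}{t}$. Arranging the induction --- including its base case and the accounting of which exponents the process actually visits --- so that it closes with a single constant $N$ (and $M$) independent of $e$, rather than one growing like $p^{e}$ as the naive binomial/$\m$-adic estimate would force, is the heart of the matter.
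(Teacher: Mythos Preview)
Your proposal pursues a direct, level-by-level strategy---attempting to show $\nu_{f}(p^{e})=\nu_{f+h}(p^{e})$ and $\ibracket{f^{\up{\lambda p^{e}}}}{e}=\ibracket{(f+h)^{\up{\lambda p^{e}}}}{e}$ for \emph{all} $e$---via a differential-operator descent on the exponent of $f$. This is entirely different from the paper's argument, and you are candid that the crux (uniform closure of the induction, with a constant $N$ independent of $e$, especially when the exponent hits a multiple of $p$) remains unresolved. That is a genuine gap: the Leibniz cross-terms in your descent cost one $\m$-adic degree per step, and since the number of steps $b$ ranges up to roughly $np^{e}/N$, the naive bookkeeping forces $N$ to grow with $e$. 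Absent a concrete mechanism to neutralize this loss---and the sketch offers none beyond naming divided-power operators---the argument does not close.

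The paper avoids this difficulty altogether by working at the level of the invariants themselves rather than their finite approximations. The two ingredients are: (i) a \emph{continuity} estimate, namely that for any ideal $\idealb$ with $\bracket{\m}{e}\subseteq\idealb\subseteq\m$ one has $|\ft{f}{\idealb}-\ft{f+h}{\idealb}|\le p^{e}\dim R/N$ whenever $f-h\in\m^{N}$; and (ii) a \emph{discreteness} statement, that all $F$-jumping numbers of both $f$ and $f+h$ (using $\Jac(f+h)\cdot S=\Jac(f)\cdot S$ via Nakayama, as you correctly observed) lie in the set $\mathscr{A}_{\ell}=\{\lambda:p^{a}(p^{b}-1)\lambda\in\NN \text{ for some } a+b\le\ell\}$, whose distinct elements are separated by more than $p^{-2\ell}$. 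For part (1), applying (i) with $\idealb=\m$ and $e=0$ and then (ii) forces $\fpt{f}=\fpt{f+h}$ once $N=p^{2\ell}\dim R$. For part (2), one reduces equality of test ideals to equality of the $F$-thresholds $\ft{f}{\ideala}=\ft{f+h}{\ideala}$ for $\ideala=\testR{S}{f}{\lambda}$ and $\ideala=\testR{S}{(f+h)}{\lambda}$ (via the elementary observation that $\ft{g}{\test{f}{\lambda}}=\ft{f}{\test{f}{\lambda}}\le\lambda$ implies $\test{g}{\lambda}\subseteq\test{f}{\lambda}$), and then applies the same continuity-plus-discreteness trick, now with $e=\up{\log_{p}(\ell+1)}$ to ensure $\bracket{\m}{e}\subseteq\Jac(f)\subseteq\ideala$. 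No finite-level equality is ever needed, and the explicit values $N=p^{2\ell}\dim R$ and $M=p^{2\ell+1}(\ell+1)\dim R$ fall out directly.

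In short: your differential-operator route, if it could be made to close, would prove something strictly stronger (equality of all finite-level data), but you have not shown that it closes, and the obstacle you flag is real. The paper's ``close plus discrete implies equal'' argument sidesteps the obstacle entirely.
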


\begin{remark}
We note that values for $N$ and $M$ in the preceding Theorem are explicitly given in terms of the characteristic and the length of $\quot{R}{\Jac(f)}$ (which is finite, by hypothesis).
\end{remark}

As a by-product of the methods used to prove our main results,  we produce explicit algorithms for computing the $F$-pure threshold of an arbitrary polynomial $f$ (and more generally, all the higher $F$-jumping numbers associated to $f$), as well as the test ideals $\test{f}{\lambda}$, as $\lambda$ varies over all non-negative real numbers (cf.\ Algorithms \ref{AlgIsoSingTestIdeal} and \ref{AlgIsoSingTodo}).  Though it is possible that (some aspects of) these algorithms may already be known to experts, they remain interesting, mostly due to their simplicity.  Indeed, these algorithms are constructed by appealing only to first principles (e.g., see Proposition \ref{ComputationTestIdeal:P}), and, in particular, avoid the use of heavy machinery.  
We would like to highlight the fact that other algorithms have also been designed to compute test ideals and related objects (see, e.g., \cite{Alg08, Alg12, Alg14}).

\begin{notation}
Throughout this article, $p$ will denote a prime integer, and $\up{\rho}$ will denote the least integer greater than or equal to a real number $\rho$.  Additionally, if $R$ is a polynomial ring over a field, and $f \in R$ is an arbitrary polynomial, then  $\Jac(f)$ will denote the Jacobian ideal of $f$, i.e., the ideal of $R$ generated by $f$ and all of its partial derivatives.
\end{notation}

\section{Preliminaries}

\newcommand{\fact}[2]{p^{#1}(p^{#2}-1)}
\newcommand{\expset}[2]{\mathscr{E}_{#2}\( #1 \)}
\newcommand{\mexp}[2]{\epsilon_{#2}\(#1\)}

\subsection{Representations of rational numbers}

\begin{definition}
\label{trunc: D}
Given $\lambda \in \RR_+$ and $e \in \NN$, we call $\tr{\lambda}{e}:=\frac{\up{p^e \lambda}-1}{p^e}$ the \dtf{$e^{\th}$ truncation of $\lambda$ \textup{(}base $p$\textup{)}}.
\end{definition}

This terminology is justified by the following observation:  $\tr{\lambda}{e}$ is the $e^{\th}$ truncation of the (unique) non-terminating base $p$ expansion of $\lambda$; i.e., if $\lambda = \sum_{s=1}^N a_s \cdot p^s + \sum_{s=1}^{\infty} {b_s} \cdot p^{-s}$ is the non-terminating base $p$ expansion of $\lambda$, then $\tr{\lambda}{e} = \sum_{s=1}^N a_s \cdot p^s + \sum_{s=1}^{e} {b_s} \cdot p^{-s}$.

\begin{remark}
\label{obvious: R}
The preceding interpretation is often useful; e.g., from this perspective, the following is clear:  Given $\lambda \in \RR_+$, the truncations $\tr{\lambda}{e}$ form a non-decreasing sequence strictly less than, and converging to, $\lambda$.  Moreover, if $\gamma \in \RR_+$ and $\tr{\lambda}{e} = \tr{\gamma}{e}$, then $\tr{\lambda}{s} = \tr{\gamma}{s}$ for every $0 \leq s \leq e$.
\end{remark}


\begin{definition}  
\label{expset: D}
Given $\lambda \in \QQ_+$, set $\expset{\lambda}{p} := \set{  (u,v)  \in \NN\times \NN_+:  \fact{u}{v} \cdot \lambda \in \NN}$.
\end{definition}

\begin{remark}
\label{mexp: R}
Note that $\expset{\lambda}{p}$ is non-empty: Writing $\lambda = p^{\nu} \cdot \frac{m}{n}$, where $\nu \in \mathbb{Z}$ and $m$, $n \in \NN$, and $p$, $m$, and $n$ are pairwise relatively prime, we see that \[\( \min \set{ -\nu, 0}, \min \set{ s \geq 1 : p^s \equiv 1 \bmod n} \) \in \expset{\lambda}{p}.\]  
\end{remark}

\begin{definition}
\label{mexp: D}
We use $\mexp{\lambda}{p}$ to denote the canonical element of $\expset{\lambda}{p}$ described in Remark \ref{mexp: R}.
\end{definition}

\begin{remark}
\label{factor: R}  
The fact that $p^v-1$ divides $p^{vb}-1$ for every $b \geq 1$ implies that $(u+a,vb) \in \expset{\lambda}{p}$ for all $(u,v) \in \expset{\lambda}{p}$ and $(a,b) \in \NN \times \NN_+$.  Moreover, a straightforward computation shows that $\mexp{\lambda}{p}$ generates $\expset{\lambda}{p}$, in  the sense that if $\mexp{\lambda}{p} = ({u}, {v})$, then $\expset{\lambda}{p} = \set{ ({u} + a, {v} b ) : (a,b) \in \NN \times \NN_+}$.
\end{remark}

\begin{corollary}
\label{distinctElements: C}
 Fix $\lambda \in \QQ_+$.  If $\mexp{\lambda}{p} = (u,v)$, then $\size{\set{ p^e \lambda - \down{p^e \lambda} : 0 \leq e \leq u+v-1}} = u+v$.
\end{corollary}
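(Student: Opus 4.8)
The plan is to prove the slightly sharper statement that the fractional parts $p^{e}\lambda - \down{p^{e}\lambda}$ are \emph{pairwise distinct} as $e$ ranges over $0, 1, \dots, u+v-1$. Since there are exactly $u+v$ such indices, this immediately gives $\size{\set{ p^{e}\lambda - \down{p^{e}\lambda} : 0 \le e \le u+v-1}} = u+v$.

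The key step is to translate a coincidence of fractional parts into a membership statement about $\expset{\lambda}{p}$. Suppose $0 \le e < e' \le u+v-1$ and $p^{e}\lambda - \down{p^{e}\lambda} = p^{e'}\lambda - \down{p^{e'}\lambda}$. Then $p^{e}\lambda - p^{e'}\lambda = \down{p^{e}\lambda} - \down{p^{e'}\lambda} \in \mathbb{Z}$; rewriting the left-hand side as $-\,p^{e}(p^{e'-e}-1)\lambda$ and observing that $p^{e}(p^{e'-e}-1)\lambda > 0$ (since $\lambda \in \QQ_+$ and $e' > e$), we conclude $p^{e}(p^{e'-e}-1)\lambda \in \NN$. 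As $e \in \NN$ and $e' - e \in \NN_+$, this says exactly that $(e,\, e'-e) \in \expset{\lambda}{p}$.

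Now I would invoke Remark \ref{factor: R}: since $\mexp{\lambda}{p} = (u,v)$, we have $\expset{\lambda}{p} = \set{ (u+a,\, vb) : (a,b) \in \NN \times \NN_+}$. Applying this to the pair $(e,\, e'-e)$ forces $e \ge u$ and $e' - e$ to be a positive multiple of $v$, hence $e' - e \ge v$, so $e' = e + (e'-e) \ge u + v$. This contradicts $e' \le u+v-1$. Therefore no such coincidence occurs, the $u+v$ fractional parts in question are pairwise distinct, and the corollary follows.

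I expect the only genuine content to be the observation in the second paragraph — that equality of $p^{e}\lambda - \down{p^{e}\lambda}$ and $p^{e'}\lambda - \down{p^{e'}\lambda}$ is precisely membership of $(e,\, e'-e)$ in $\expset{\lambda}{p}$ — after which the generation property of $\mexp{\lambda}{p}$ does all the work. The minor points to get right are that the relevant integer is in fact a \emph{natural} number (which uses positivity of $\lambda$) and that $e' - e \ge 1$, so that the pair $(e,\, e'-e)$ genuinely lies in $\NN \times \NN_+$ rather than merely $\NN \times \NN$.
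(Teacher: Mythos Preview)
Your proof is correct and follows essentially the same route as the paper: assume two fractional parts coincide for indices $e<e'$, deduce $(e,\,e'-e)\in\expset{\lambda}{p}$, and then use the generation property of $\mexp{\lambda}{p}$ from Remark~\ref{factor: R} to force $e'\ge u+v$. The paper separates out the trivial case $(u,v)=(0,1)$ explicitly, which you handle implicitly (there is only one index, so no coincidence is possible), and your final contradiction is phrased a bit more directly, but the argument is the same.
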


\begin{proof}  The assertion is clear when $(u,v) = (0,1)$.  Next, assume that $(u,v) \neq (0,1)$, and by means of contradiction, suppose there exists $0 \leq a < b \leq u+v-1$ such that $p^a \lambda - \down{p^a \lambda} = p^b \lambda - \down{p^b \lambda}$.  Consequently, $ p^b \lambda - p^a \lambda  = p^a (p^{b-a} - 1) \lambda \in \NN$, and since $a<b$, it follows that $(a,b-a) \in \expset{\lambda}{p}$. However, as $a+(b-a) = b \leq u+v-1$, it follows that either $a \leq u$ or $b-a \leq v$, which is impossible by Remark \ref{factor: R}.
\end{proof}

%
%

\begin{lemma}
\label{agreement: L}
If $(u,v) \in \expset{\lambda}{p}$ and $(a,b) \in \expset{\gamma}{p}$, then $\lambda = \gamma$ if and only if $\tr{\lambda}{u+a + vb} = \tr{\gamma}{u+a+vb}$.
\end{lemma}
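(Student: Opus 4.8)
The plan is as follows. The forward implication is immediate — if $\lambda=\gamma$ then every truncation satisfies $\tr\lambda e=\tr\gamma e$ — so the content is the reverse implication, which I would prove by first clearing denominators, so that the relevant truncation becomes an injective function of a single integer.

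The first step is to bring $\lambda$ and $\gamma$ to a common ``exponent pair''. Applying Remark \ref{factor: R} to $(u,v)\in\expset\lambda p$ together with $(a,b)\in\expset\gamma p\subseteq\NN\times\NN_+$ yields $(u+a,vb)\in\expset\lambda p$; applying it to $(a,b)\in\expset\gamma p$ together with $(u,v)\in\NN\times\NN_+$ (and using $a+u=u+a$, $bv=vb$) yields $(u+a,vb)\in\expset\gamma p$. Writing $w:=u+a$, $t:=vb$ and $D:=p^{w}(p^{t}-1)$, this means $D\lambda$ and $D\gamma$ are natural numbers; set $c:=D\lambda$ and $c':=D\gamma$, so that $\lambda=c/D$, $\gamma=c'/D$, and $\lambda=\gamma$ if and only if $c=c'$. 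Crucially, the level occurring in the statement is exactly $u+a+vb=w+t$.

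Next I would compute this truncation explicitly. From
$$p^{w+t}\lambda=\frac{p^{t}c}{p^{t}-1}=c+\frac{c}{p^{t}-1}$$
and $c\in\NN$, one gets $\up{p^{w+t}\lambda}=c+\up{c/(p^{t}-1)}$, hence $\tr\lambda{w+t}=\bigl(c+\up{c/(p^{t}-1)}-1\bigr)/p^{w+t}$, and the analogous formula holds for $\gamma$ with $c$ replaced by $c'$. Thus the hypothesis $\tr\lambda{w+t}=\tr\gamma{w+t}$ is equivalent to $\varphi(c)=\varphi(c')$, where $\varphi(n):=n+\up{n/(p^{t}-1)}$. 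Since $\varphi\colon\NN\to\NN$ is strictly increasing (if $n_{1}<n_{2}$ then $\up{n_{1}/(p^{t}-1)}\le\up{n_{2}/(p^{t}-1)}$, so $\varphi(n_{1})<\varphi(n_{2})$), it is injective, whence $c=c'$ and therefore $\lambda=\gamma$.

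I do not anticipate a real obstacle here; the only point that requires care is the bookkeeping in the first step — correctly identifying the level $u+a+vb$ of the statement with the common exponent $(u+a,vb)$ furnished by Remark \ref{factor: R} — after which everything reduces to elementary manipulation of ceilings. (A more hands-on alternative phrases the argument in terms of base-$p$ digits: membership of $(u,v)$ in $\expset\lambda p$ says the base-$p$ expansion of $\lambda$ is periodic with period $v$ beyond its $u$-th digit, so $\lambda$ and $\gamma$ both become periodic with period $vb$ beyond their $(u+a)$-th digit, and agreement of the truncations at level $u+a+vb$ — one full pre-period plus one full period — forces agreement of every digit by Remark \ref{obvious: R}. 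I would favour the denominator-clearing argument above, since it avoids the edge cases inherent in non-terminating expansions.)
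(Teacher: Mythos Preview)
Your proof is correct and follows essentially the same route as the paper: both pass to the common exponent pair $(s,e)=(u+a,vb)$ via Remark~\ref{factor: R} and then express $\up{p^{s+e}\lambda}$ as $p^s(p^e-1)\lambda$ plus a ceiling term. The only cosmetic difference is the endgame: the paper invokes Remark~\ref{obvious: R} to see that $\up{p^s\lambda}=\up{p^s\gamma}$ is common to both sides and cancels it, whereas you bypass that remark by observing that $n\mapsto n+\up{n/(p^t-1)}$ is strictly increasing.
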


\begin{proof}
The forward implication is clear, so we focus on the reverse one.  Setting $(s,e) = (u+a,vb)$, it follows from Remark \ref{factor: R} that $(s,e) \in \expset{\lambda}{p} \cap \expset{\gamma}{p}$.  By Remark \ref{obvious: R}, our hypothesis that $\tr{\lambda}{s+e} = \tr{\gamma}{s+e}$ implies that $\tr{\lambda}{s} = \tr{\gamma}{s}$ and hence, that $\up{p^s \lambda} = \up{p^s \gamma}$.  In what follows, $N$ will denote this common value.  

As $p^{s+e} \lambda = p^s(p^e-1) \lambda + p^s \lambda$, we see that $\up{p^{s+e} \lambda} = p^s(p^e-1) \lambda + N$; by symmetry, we also have that $\up{p^{s+e} \gamma} = p^s(p^e-1) \gamma + N$.  Finally, the assumption that $\tr{\lambda}{s+e} = \tr{\gamma}{s+e}$ implies that $\up{p^{s+e}\lambda} = \up{p^{s+e}\gamma}$, and it follows that $\lambda = \gamma$.
\end{proof}

\begin{corollary}
\label{sumOfExponents: C}
Fix $\lambda \in \QQ_+$ and $(u,v) \in \expset{\lambda}{p}$.    If  $w \in \NN_+$,  and $\gamma \in \QQ_+$ is contained in \[ \( \tr{\lambda}{u+vw}, \lambda \) \cup \( \lambda, \tr{\lambda}{u+vw} + {p^{-u-vw}} \right], \] then $a+b > w$ for every $(a,b) \in \expset{\gamma}{p}$.
\end{corollary}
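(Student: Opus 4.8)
The plan is to argue by contradiction, reducing the claim to Lemma~\ref{agreement: L}. Write $e = u + vw$ for brevity. First I would unwind the hypothesis: by Remark~\ref{obvious: R} together with the definition of truncation, $\tr{\lambda}{e} < \lambda \leq \tr{\lambda}{e} + p^{-e}$, so $\lambda$ itself lies in $\( \tr{\lambda}{e}, \tr{\lambda}{e} + p^{-e} \right]$, and hence the union appearing in the statement is exactly $\( \tr{\lambda}{e}, \tr{\lambda}{e} + p^{-e} \right]$ with the single point $\lambda$ removed. Thus the hypothesis on $\gamma$ amounts precisely to $\tr{\lambda}{e} < \gamma \leq \tr{\lambda}{e} + p^{-e}$ and $\gamma \neq \lambda$. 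Now suppose, toward a contradiction, that some $(a,b) \in \expset{\gamma}{p}$ satisfies $a + b \leq w$; I will show this forces $\gamma = \lambda$.

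The first step is to check that $\gamma$ and $\lambda$ have the same truncations up to level $e$. Multiplying $\tr{\lambda}{e} < \gamma \leq \tr{\lambda}{e} + p^{-e}$ by $p^e$ and using $\tr{\lambda}{e} = \(\up{p^e\lambda} - 1\)/p^e$ gives $\up{p^e\lambda} - 1 < p^e\gamma \leq \up{p^e\lambda}$; since every real number in the half-open interval $\(\up{p^e\lambda}-1, \up{p^e\lambda}\right]$ has ceiling $\up{p^e\lambda}$, this pins down $\up{p^e\gamma} = \up{p^e\lambda}$, whence $\tr{\gamma}{e} = \tr{\lambda}{e}$. By Remark~\ref{obvious: R}, it follows that $\tr{\gamma}{s} = \tr{\lambda}{s}$ for every $0 \leq s \leq e$.

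The second step is a bookkeeping estimate on the exponents: since $v \geq 1$ and $a \geq 0$, we have $a + vb \leq va + vb = v(a+b) \leq vw$, so $u + a + vb \leq u + vw = e$. Applying the conclusion of the first step with $s = u + a + vb$ then yields $\tr{\gamma}{u+a+vb} = \tr{\lambda}{u+a+vb}$, and since $(u,v) \in \expset{\lambda}{p}$ and $(a,b) \in \expset{\gamma}{p}$, Lemma~\ref{agreement: L} forces $\lambda = \gamma$ --- contradicting $\gamma \neq \lambda$. I do not anticipate a genuine obstacle: the only two points that require a little care are determining $\up{p^e\gamma}$ correctly from the half-open interval (so that the $\leq$ at the right endpoint is handled cleanly) and the exponent inequality $u + a + vb \leq e$; both are elementary, so this corollary should follow quickly once Lemma~\ref{agreement: L} is in hand.
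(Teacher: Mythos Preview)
Your proof is correct and follows essentially the same route as the paper: argue by contradiction, verify $\tr{\gamma}{u+vw} = \tr{\lambda}{u+vw}$, use the exponent inequality $u + a + vb \leq u + vw$ (via $va \geq a$ since $v \geq 1$), and invoke Lemma~\ref{agreement: L} to force $\gamma = \lambda$. The only difference is that you supply the ceiling computation for the truncation equality explicitly, whereas the paper leaves this to the reader.
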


\begin{proof} As the reader may readily verify, the assumption that $\gamma$ is contained in either of these intervals implies that $\tr{\gamma}{u+vw} = \tr{\lambda}{u+vw}$.  By means of contradiction, suppose that $a+b \leq w$ for some $(a,b) \in \expset{\gamma}{p}$.  As $v \in \NN_+$, by definition of $\expset{\lambda}{p}$, it follows that $u+vw \geq u+v(a+b) \geq u+a + vb$, and Remark~\ref{obvious: R} then implies that $\tr{\gamma}{u+a+vb} = \tr{\lambda}{u+a+vb}$.  Finally, Lemma~\ref{agreement: L} allows us to conclude that $\gamma = \lambda$, which is a contradiction.
\end{proof}

\subsection{Generalized test ideals}\label{SecGTI}
Test ideals were introduced and studied by Hochster and Huneke (see, e.g.,  \cite{HoHu1,HoHu2}), and were later generalized to the context of pairs by Hara and Yoshida \cite{H-Y}.  Below, we recall the basic properties of these so-called generalized test ideals.  In this article, we will always work in a regular ambient ring, and will hence follow the concrete description of these ideals given by Blickle, Musta\c{t}\u{a}, and Smith \cite{BMS-MMJ}.

Throughout this subsection, $R$ will denote a regular ring of characteristic $p>0$, and $f$ will denote an arbitrary element of $R$.  Given $e \in \NN_+$, we will use $\ibracket{f}{e}$ to denote the smallest ideal $\idealb$ of $R$ (with respect to inclusion) with the property that $f \in \bracket{\idealb}{e}$;  the fact that such an ideal exists is a consequence of the flatness of the Frobenius map on $R$ \cite[Definition $2.2$]{BMS-MMJ}.  

\begin{remark}
\label{ibracketcomputation: R}  When $R = \mathbb{L}[x_1, \cdots, x_n]$ is a polynomial ring over a field $\mathbb{L}$ with $[\mathbb{L}:\mathbb{L}^p] < \infty$, the ideal $\ibracket{f}{e}$ may be described as follows:  If $\mathbb{B}_e$ is any free basis for $R$ over the subring $R^{p^e} = \mathbb{L}^{p^e}[x_1^{p^e}, \cdots, x_n^{p^e}]$ and $f = \sum_{\mu \in \mathbb{B}_e}  f_{\mu}^{p^e} \cdot \mu$ is the representation of $f$ in this basis, then $\ibracket{f}{e}$ is the ideal of $R$ generated by the coordinates $\set{ f_{\mu} : \mu \in \mathbb{B}_e}$.  This ideal does not depend on the choice of basis for $R$ over $R^{p^e}$; see, e.g., \cite[Section 3]{AMBL} or \cite[Proposition 2.5]{BMS-MMJ}. 
 \end{remark}

\begin{definition}
\label{TI: D}
  The ideals $\ibracket{f^{\up{p^e \lambda}}}{e}$ form an ascending chain, and hence, must stabilize. We call the stable member of this chain the \dtf{test ideal of $f$ with respect to the parameter $\lambda$}, and denote it by $\testR{R}{f}{\lambda}$.  Restated, $\testR{R}{f}{\lambda} = \bigcup_{e \geq 1} \ibracket{f^{\up{p^e \lambda}}}{e}$, which equals $\ibracket{f^{\up{p^e \lambda}}}{e}$ for $e \gg 0$.  When the ambient ring is clear from context, we will often write $\test{f}{\lambda}$ instead of $\testR{R}{f}{\lambda}$.
\end{definition}

\begin{remark}
\label{stabilizationTI: R}
If $p^e \lambda \in \NN$ for some $e \geq 1$, then the chain of ideals defining $\test{f}{\lambda}$ stabilizes at the $e^{\th}$ step; in other words, $\test{f}{\lambda} = \ibracket{f^{\up{p^e \lambda}}}{e}$ \cite[Lemma $2.1$]{BMS-Hyp}.
\end{remark}

\begin{remark}
\label{varyingParameter: R}
Here, we recall some of the ways in which $\test{f}{\lambda}$ varies with $\lambda$.
\begin{enumerate}
\item \label{textIdealsDecrease: i} $\test{f}{\lambda} \subseteq \test{f}{\gamma}$ whenever $\lambda \geq \gamma$ \cite[Proposition $2.11$]{BMS-MMJ}.
\item \label{constanttotheright: i} Given $\lambda \geq 0$, $\exists \ \error > 0$ such that $\test{f}{\lambda} = \test{f}{\lambda + \delta}$ for all $0 \leq \delta \leq \error$ \cite[Corollary $2.16$]{BMS-MMJ}.
\item \label{Skoda: i} $\test{f}{\lambda} = f^{\down{\lambda}} \cdot \test{f}{\lambda-\down{\lambda}}$ \cite[Theorem 2.25]{BMS-MMJ}.  In particular, the set of all test ideals $\set{\test{f}{\lambda} : \lambda \in \RR_{\geq 0}}$  is completely determined by $\set{\test{f}{\lambda}: 0 \leq \lambda < 1}$. 
\end{enumerate}
\end{remark}

\begin{definition}  A real number $\xi \geq 0$ is called an \dtf{$F$-jumping number of $(R,f)$} if $\xi = 0$, or $\xi > 0$ and $\test{f}{\xi} \neq \test{f}{\xi-\error}$ for every $0 < \error < \xi$.  We will use $\jnset{R,f}$, and often simply $\jnset{f}$, to denote the set of all $F$-jumping numbers of $f$. 
\end{definition}

\begin{remark}[$F$-pure threshold]  
\label{FPT: Rmk}
As the test ideal corresponding to the parameter $\lambda = 0$ is trivial, Remark \ref{varyingParameter: R} \eqref{constanttotheright: i} implies that $\test{f}{\lambda}$ is trivial for small positive values of $\lambda$, and hence, the first positive $F$-jumping number of $(R,f)$ may be described as $\sup \set{ \lambda > 0 : \test{f}{\lambda} = R}$.  We call this $F$-jumping number the \dtf{$F$-pure threshold of $(R,f)$}, and denote it by $\fpt{R,f}$.
\end{remark}

\begin{theorem}
\label{discretenessRationality: T} \cite[Theorem $1.1$]{BMS-Hyp} The $F$-jumping numbers of $f$ are discrete and rational:  for every bounded interval $I \subseteq \RR_{\geq 0}$, we have that $\jnset{f} \cap I$ is a finite subset of $\QQ$.
\end{theorem}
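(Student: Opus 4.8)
The plan is to establish the two assertions separately: discreteness (that $\jnset{f}$ meets every bounded interval in a finite set) and rationality of each jumping number. The first rests on a uniform degree bound for test ideals, the second on a recursive analysis of the exponents $\up{p^e\lambda}$.

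First I would prove discreteness. Fix $N \in \NN_+$; it suffices to see $\jnset{f} \cap [0,N]$ is finite. By Definition~\ref{TI: D}, $\test{f}{\xi} = \ibracket{f^{\up{p^e\xi}}}{e}$ for $e \gg 0$. Feeding $g = f^{\up{p^e\xi}}$, a polynomial of degree at most $(p^e N + 1)\deg f$, into the description of $\ibracket{g}{e}$ in Remark~\ref{ibracketcomputation: R}---where, writing $g = \sum_\mu g_\mu^{p^e}\mu$ in a monomial-type $R^{p^e}$-basis of $R$, one has $p^e\deg g_\mu \le \deg g$ for each $\mu$ occurring---shows that $\test{f}{\xi}$ is generated in degrees at most $D := N\deg f + 1$, uniformly in $\xi \in [0,N]$. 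Now a strictly descending chain of ideals of $R$ each generated in degrees $\le D$ has length at most $\dim_{\LL} W$, where $W$ is the finite-dimensional $\LL$-space of polynomials of degree $\le D$: the assignment $I \mapsto I \cap W$ is order-preserving and recovers each ideal generated in degrees $\le D$ via $I = (I \cap W)$, hence embeds any such chain into the chains of $\LL$-subspaces of $W$. Since $\test{f}{\lambda}$ is non-increasing in $\lambda$ (Remark~\ref{varyingParameter: R}\eqref{textIdealsDecrease: i}) and, by Remark~\ref{varyingParameter: R}\eqref{constanttotheright: i} together with the definition of an $F$-jumping number, locally constant away from $\jnset{f}$, the distinct values of $\test{f}{\lambda}$ for $\lambda \in [0,N]$ form one such chain and each element of $\jnset{f} \cap (0,N]$ records a strict drop in it; hence $\jnset{f} \cap [0,N]$ is finite.

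For rationality, fix $\xi \in \jnset{f}$ and put $\nu_e := \up{p^e\xi} - 1$, so $\nu_e/p^e = \tr{\xi}{e} \to \xi$. Using Remark~\ref{stabilizationTI: R} one has $\ibracket{f^{\nu_e}}{e} = \test{f}{\tr{\xi}{e}}$, which---by the discreteness just proved---strictly contains $\test{f}{\xi}$ once $\tr{\xi}{e}$ enters the jumping-number-free interval immediately to the left of $\xi$; combined with $\test{f}{\xi} = \ibracket{f^{\nu_e+1}}{e}$ for $e \gg 0$, this identifies $\nu_e = \max\{ r : f^r \notin \test{f}{\xi}^{[p^e]}\}$ for $e \gg 0$. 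Flatness of Frobenius gives $\ibracket{g^p}{e+1} = \ibracket{g}{e}$; applying this with $g = f^{\nu_e}$ and with $g = f^{\nu_e+1}$ yields the recursion $p\nu_e \le \nu_{e+1} \le p\nu_e + p - 1$, so the base-$p$ digits of $\xi$ past a fixed place are $c_e := \nu_{e+1} - p\nu_e \in \{0,\dots,p-1\}$, and $\xi \in \QQ$ exactly when $(c_e)$ is eventually periodic. Writing $J := \test{f}{\xi}$ and $Q_e := (J^{[p^e]} : f^{\nu_e})$, a short computation with $p$-th-root coordinates shows $c_e = \max\{ c \le p-1 : \ibracket{f^c}{1} \not\subseteq Q_e\}$ and $Q_{e+1} = \{ g \in R : \ibracket{f^{c_e}g}{1} \subseteq Q_e\}$; thus $Q_{e+1}$ is the image of $Q_e$ under one fixed operator, and $(c_e)$ is eventually periodic as soon as the forward orbit $\{Q_e : e \ge 1\}$ is finite.

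I expect this last point---finiteness of the orbit $\{Q_e\}$, equivalently eventual periodicity of the carry sequence $(c_e)$---to be the main obstacle; the reductions above are essentially bookkeeping, but extracting rationality from the recursion genuinely requires a finiteness statement about the ideals that arise, which I would establish following the treatment in \cite{BMS-Hyp}. Granting it, every $F$-jumping number is rational, and by the discreteness argument only finitely many lie in any bounded interval, which is the assertion of the theorem.
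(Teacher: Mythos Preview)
The paper does not supply a proof of this theorem; it is quoted from \cite{BMS-Hyp}.  Your discreteness argument is essentially the one given there: the uniform degree bound on generators of $\test{f}{\xi}$ for $\xi$ in a bounded interval (recorded in the present paper, for the interval $[0,1)$, as Remark~\ref{RemBounds: R}\eqref{bounds2}) forces the test ideals to take only finitely many values, and hence the jumping numbers in that interval to be finite.  That half is correct.

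For rationality, the recursion you set up through the colon ideals $Q_e = (J^{[p^e]} : f^{\nu_e})$ is internally consistent, but the step you yourself flag---finiteness of the orbit $\{Q_e\}$---is a genuine gap, and it is not what \cite{BMS-Hyp} does.  The argument there is much shorter and uses only what you have already established.  Once discreteness is known, one invokes the self-map $\xi \mapsto p\xi - \lfloor p\xi \rfloor$ on $\jnset{f}$ (this is exactly Lemma~\ref{SelfMap: L} of the present paper, whose proof does not rely on rationality).  Iterating, the fractional parts $p^e\xi - \lfloor p^e\xi \rfloor$ for $e \ge 0$ all lie in the finite set $\jnset{f} \cap [0,1)$, so two of them coincide: $p^a\xi \equiv p^b\xi \pmod{1}$ for some $0 \le a < b$, whence $p^a(p^{b-a}-1)\xi \in \mathbb{Z}$ and $\xi \in \mathbb{Q}$.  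This replaces your unproved orbit-finiteness for the $Q_e$ with the already-proved finiteness of $\jnset{f} \cap [0,1)$, and closes the proof.
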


\begin{remark}
\label{bijectionJNTI: R}
By definition, given any bounded interval $I$ of $\mathbb{R}_{\geq 0}$, the map $\lambda \mapsto \test{f}{\lambda}$ induces a bijection between $\jnset{f} \cap I$ and $\set{ \test{f}{\lambda}: \lambda \in I}$. 
\end{remark}


\begin{lemma}
\label{SelfMap: L}
For every $e \in \NN_+$, the function $\lambda \mapsto p^e \lambda - \down{p^e \lambda}$ maps $\jnset{f}$ to $\jnset{f} \cap [0,1)$.
\end{lemma}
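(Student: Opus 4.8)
The plan is to deduce Lemma~\ref{SelfMap: L} from two structural properties of test ideals: the Skoda-type identity of Remark~\ref{varyingParameter: R}\eqref{Skoda: i}, and a ``composition law'' for the operation $\ibracket{\cdot}{e}$. For the latter I would first extend the notation $\ibracket{\cdot}{e}$ from elements to ideals in the evident way — for an ideal $I$ of $R$, let $\ibracket{I}{e}$ be the smallest ideal $\idealb$ with $I \subseteq \bracket{\idealb}{e}$ — and record that the universal property of this operation, together with the flatness of Frobenius on $R$, gives $\ibracket{\ibracket{I}{a}}{b} = \ibracket{I}{a+b}$ for all $a,b \in \NN_+$. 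Feeding this into Definition~\ref{TI: D} (recall from there that $\test{f}{\nu} = \ibracket{f^{\up{p^{e'}\nu}}}{e'}$ once $e'$ is large, for every real $\nu \geq 0$) and using $\up{p^{e'}(p^e\nu)} = \up{p^{e+e'}\nu}$, one obtains the key identity
\[ \test{f}{\nu} \;=\; \ibracket{\test{f}{p^e\nu}}{e} \qquad\text{for every } \nu \in \RR_{\geq 0} \text{ and } e \in \NN_+ . \]

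Now fix $\xi \in \jnset{f}$ and $e \in \NN_+$, and put $k := \down{p^e\xi}$ and $\eta := p^e\xi - k \in [0,1)$; the goal is $\eta \in \jnset{f}$. If $\xi = 0$ or $\eta = 0$ this is immediate from the definition of an $F$-jumping number, so assume $\xi > 0$ and $\eta > 0$; then $f \neq 0$ (as $0$ has no positive $F$-jumping numbers), so $f$ is a nonzerodivisor in the domain $R$. For $0 < \error < \eta/p^e$ one has $\down{p^e\xi - p^e\error} = k$, so Remark~\ref{varyingParameter: R}\eqref{Skoda: i} gives $\test{f}{p^e\xi - p^e\error} = f^k\cdot\test{f}{\eta - p^e\error}$ and, likewise, $\test{f}{p^e\xi} = f^k\cdot\test{f}{\eta}$. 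Applying the key identity at $\nu = \xi$ and at $\nu = \xi - \error$ (legitimate since $p^e\error < \eta \leq p^e\xi$, so $\xi - \error > 0$) turns these into
\[ \test{f}{\xi} = \ibracket{f^k\test{f}{\eta}}{e}, \qquad \test{f}{\xi - \error} = \ibracket{f^k\test{f}{\eta - p^e\error}}{e} . \]
Because $\xi \in \jnset{f}$ and $0 < \error < \xi$, the left-hand sides are distinct; hence $f^k\test{f}{\eta} \neq f^k\test{f}{\eta - p^e\error}$, and cancelling the nonzerodivisor $f^k$ yields $\test{f}{\eta} \neq \test{f}{\eta - p^e\error}$. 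As $\error$ ranges over $(0,\eta/p^e)$ the quantity $p^e\error$ ranges over all of $(0,\eta)$, so $\test{f}{\eta} \neq \test{f}{\eta - \error'}$ for every $\error' \in (0,\eta)$; that is, $\eta \in \jnset{f}$. Since also $\eta \in [0,1)$, this is exactly the assertion.

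The only ingredient that is not pure bookkeeping is the composition law $\ibracket{\ibracket{I}{a}}{b} = \ibracket{I}{a+b}$, together with the minor care needed to pass between the element-version of $\ibracket{\cdot}{e}$ used in the paper and the ideal-version it forces, so that the displayed key identity is rigorous. I expect this to be the only step deserving real attention; it is standard, following from the universal property of $\idealb \mapsto \ibracket{\idealb}{e}$ and the flatness of the Frobenius endomorphism of $R$ (cf.\ the references cited in Remark~\ref{ibracketcomputation: R}). Everything else — the Skoda reduction and the elementary floor computations — is routine given the properties already quoted in Subsection~\ref{SecGTI}.
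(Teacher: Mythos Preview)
Your argument is correct and rests on the same two ingredients the paper's sketch uses---the Frobenius compatibility of test ideals (you phrase it as $\test{f}{\nu}=\ibracket{\test{f}{p^e\nu}}{e}$, the paper as $\test{f}{p^e\lambda}\subseteq\bracket{\test{f}{\lambda}}{e}$; these are adjoint formulations of the same composition law) and Skoda. The only organizational difference is that the paper first concludes $p^e\lambda\in\jnset{f}$ and then applies Skoda, whereas you fold both steps together and go straight to the fractional part; neither route is materially harder. One small cleanup: the ``cancelling the nonzerodivisor $f^k$'' step is unnecessary (and the domain hypothesis you invoke for it is not part of the ambient assumptions)---the implication $f^k\test{f}{\eta}\neq f^k\test{f}{\eta-p^e\error}\Rightarrow\test{f}{\eta}\neq\test{f}{\eta-p^e\error}$ holds trivially, since equal ideals remain equal after multiplication by anything.
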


\begin{proof} As this fact is well-known (see, e.g., \cite[Proposition 3.4]{BMS-MMJ}), we will only provide a sketch of the proof: Given $\lambda \in \jnset{f}$, one may show that $\test{f}{p^e  \lambda} \subseteq \bracket{\test{f}{\lambda}}{e}$, while $\test{f}{{p^e  \lambda}-\error} \not \subseteq \bracket{\test{f}{\lambda}}{e}$ for every $0 < \error < p^e \lambda$, which shows $p^e \lambda \in \jnset{f}$.  Finally, Remark \ref{varyingParameter: R} \eqref{Skoda: i} then shows that $p^e \lambda - \down{p^e \lambda} \in \jnset{f}$, as well.
\end{proof}

\begin{proposition} \cite[Proposition 2.13]{BMS-MMJ}  
\label{commutesWithLocalization: P}  The formation of the test ideal commutes with localization.  In particular, if $\m$ is a prime ideal of $R$, then $\testR{R}{f}{\lambda} \cdot R_{\m} = \testR{R_{\m}}{f}{\lambda}$.
\end{proposition}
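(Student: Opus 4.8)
The plan is to deduce the statement from the single fact that the \emph{inverse bracket} operation commutes with localization, after which the test ideal follows because it is built from its inverse brackets. Fix a multiplicative set $S \subseteq R$; the displayed assertion is the case $S = R \setminus \m$, so I will prove the slightly more general statement $S^{-1}\,\testR{R}{f}{\lambda} = \testR{S^{-1}R}{f}{\lambda}$, writing $R' = S^{-1}R$ (again regular of characteristic $p$). Two bookkeeping observations come first: if $\idealb = (g_1,\dots,g_r)$ is an ideal of $R$ then $\bracket{(\idealb R')}{e} = \bracket{\idealb}{e}\,R'$, since both are generated by $g_1^{p^e},\dots,g_r^{p^e}$; and localization is exact, hence commutes with the ascending union $\testR{R}{f}{\lambda} = \bigcup_{e\geq 1}\ibracket{f^{\up{p^e \lambda}}}{e}$ of Definition~\ref{TI: D}. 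Since the exponent $\up{p^e \lambda}$ depends only on the real number $\lambda$ and not on the ambient ring, these observations reduce the proposition to the following claim: for every $g \in R$ and every $e \in \NN_+$, the ideal $\ibracket{g}{e}\cdot R'$ is the smallest ideal $\mathfrak{c}$ of $R'$ with $g \in \bracket{\mathfrak{c}}{e}$; that is, $\ibracket{g}{e}\cdot R'$ is the inverse bracket of $g$ formed inside $R'$.

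One inclusion of this claim is formal: from $g \in \bracket{(\ibracket{g}{e})}{e}$ and the first observation above, $g \in \bracket{(\ibracket{g}{e}R')}{e}$, so $\ibracket{g}{e}\cdot R'$ is \emph{some} ideal of $R'$ whose $e$-th bracket contains $g$, and hence it contains the smallest such ideal. For the reverse inclusion I would use the explicit description of inverse brackets via a free Frobenius basis (Remark~\ref{ibracketcomputation: R}), available here because $R$ is a polynomial ring over an $F$-finite field. Fix a free basis $\mathbb{B}_e$ of $R$ over $R^{p^e}$ and expand $g = \sum_{\mu \in \mathbb{B}_e} g_\mu^{p^e}\,\mu$, so that $\ibracket{g}{e} = (g_\mu : \mu \in \mathbb{B}_e)$. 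The crux is that $R'$ is free over $(R')^{p^e}$ with the same basis $\mathbb{B}_e$: each $s \in S$ divides $s^{p^e}$, which becomes a unit upon inverting $S^{p^e} = \set{s^{p^e} : s \in S} \subseteq R^{p^e}$, so $S$ too is inverted, giving $R' = (S^{p^e})^{-1}R = \bigl((S^{p^e})^{-1}R^{p^e}\bigr)\otimes_{R^{p^e}} R$; since $(S^{p^e})^{-1}R^{p^e} = (R')^{p^e}$ and $R = \bigoplus_{\mu} R^{p^e}\mu$, extending scalars yields $R' = \bigoplus_{\mu} (R')^{p^e}\mu$. Hence $g = \sum_{\mu} (g_\mu)^{p^e}\mu$ is the expansion of $g$ over $(R')^{p^e}$ in the basis $\mathbb{B}_e$, so the inverse bracket of $g$ formed in $R'$ is $(g_\mu : \mu)\,R' = \ibracket{g}{e}\cdot R'$; in fact this argument yields both inclusions at once. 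Assembling the pieces, $S^{-1}\,\testR{R}{f}{\lambda} = \bigcup_{e\geq 1}\bigl(\ibracket{f^{\up{p^e \lambda}}}{e}\cdot R'\bigr) = \testR{R'}{f}{\lambda}$, and taking $S = R \setminus \m$ gives the displayed particular case.

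The step I expect to be the main obstacle — the only one requiring genuine input rather than bookkeeping — is the reverse inclusion above, i.e.\ the assertion that an inverse bracket computed globally does not grow upon localization. For polynomial rings over an $F$-finite field this is handled cleanly by transporting a \emph{global} finite Frobenius basis to the localization, as just sketched. For an arbitrary regular ring no such global basis need exist; there one would instead invoke Kunz's theorem on the flatness of Frobenius together with the $F$-finite hypothesis, under which $F^e_* R$ is a finitely presented $R$-module, so that $\operatorname{Hom}_R(F^e_* R, R)$ — whose evaluations on $F^e_* g$ generate $\ibracket{g}{e}$ — commutes with localization. This is exactly the content of \cite[Proposition~2.13]{BMS-MMJ}.
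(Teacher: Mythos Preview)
The paper does not prove this proposition; it is quoted directly from \cite[Proposition~2.13]{BMS-MMJ} without argument. Your proof is correct. The reduction to showing that the operation $g \mapsto \ibracket{g}{e}$ commutes with localization, followed by the explicit transport of a free basis $\mathbb{B}_e$ of $R$ over $R^{p^e}$ to a free basis of $R' = S^{-1}R$ over $(R')^{p^e}$, is the natural hands-on argument in the $F$-finite polynomial setting that this paper actually uses (cf.\ Remark~\ref{ibracketcomputation: R}). The identification $(S^{p^e})^{-1}R = S^{-1}R$ and $(S^{p^e})^{-1}R^{p^e} = (S^{-1}R)^{p^e}$ is straightforward, and once the basis carries over, the coefficient description of the inverse bracket gives both inclusions at once, as you note.

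Your closing paragraph also correctly identifies the shape of the general argument in \cite{BMS-MMJ}: there one realizes $\ibracket{g}{e}$ as the image of the evaluation-at-$g$ map from $\operatorname{Hom}_R(F^e_*R,R)$, and uses that $F^e_*R$ is finitely presented over $R$ (regular and $F$-finite) so that this $\operatorname{Hom}$ commutes with localization. Your basis argument is precisely the specialization of this to the case where $F^e_*R$ is globally free, which is all that is needed for the applications in the present paper.
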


\subsection{$F$-thresholds}
In this subsection, we discuss some basic properties of $F$-thresholds.

\begin{definition}\cite{MTW2005}  If $f$ is an element of an $F$-finite regular ring $R$ and $\idealb$ is a proper ideal of $R$ with  $f \in \sqrt{\idealb}$,  the limit $\ft{f}{\idealb} := \lim_{e \to \infty} p^{-e} \cdot {\max \set{ N \geq 1 : f^N \notin \bracket{\idealb}{e}}}$ exists, and is called the \dtf{$F$-threshold of $f$ with respect to $\idealb$}.  If $\idealb = R$, we set $\ft{f}{\idealb}= 0$.
\end{definition}

Our interest in $F$-thresholds stems from the following result.

\begin{proposition} \cite[Proposition 2.29 and Corollary 2.30]{BMS-MMJ}
\label{F-thresholds: P}
Fix an element $f$ of an $F$-finite regular ring $R$.
\begin{enumerate}
\item \label{Fthres1: i} If $\idealb$ is an ideal of $R$ with $f \in \sqrt{\idealb}$, then $\test{f}{\ft{f}{\idealb}} \subseteq \idealb$. 
\item \label{Fthres2: i} If $\lambda \in \RR_{\geq 0}$, then $f \in \sqrt{\test{f}{\lambda}}$, and $\ft{f}{\test{f}{\lambda}} \leq \lambda$.
\end{enumerate}

Furthermore, the $F$-jumping numbers of $f$ coincide with the $F$-thresholds of $f$.  More precisely, \[ \jnset{R,f} = \set{ \ft{f}{\idealb} : f \in \sqrt{\idealb} }.\]
\end{proposition}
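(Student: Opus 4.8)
The plan is to prove the three assertions in the order listed; (1) is where all the real content lies, and (2) and (3) follow from it by soft arguments. For (1), fix a proper ideal $\idealb$ with $f \in \sqrt{\idealb}$, set $c = \ft{f}{\idealb}$, and for $e \geq 1$ put $\nu_e = \max\set{N \geq 1 : f^N \notin \bracket{\idealb}{e}}$, which is finite because $f \in \sqrt{\idealb}$; by definition $c = \lim_{e} \nu_e/p^e$. The key preliminary is the inequality $\nu_{e+1}+1 \leq p(\nu_e+1)$, immediate from raising $f^{\nu_e+1} \in \bracket{\idealb}{e}$ to the $p$-th power (together with $(\bracket{\idealb}{e})^{[p]} = \bracket{\idealb}{e+1}$) and reading off the definition of $\nu_{e+1}$. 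Iterating it shows that the sequence $(\nu_e+1)/p^e$ is non-increasing; as it evidently has the same limit $c$ as $\nu_e/p^e$, it decreases to $c$ with \emph{every term $\geq c$}. This last point is the crux: unlike $\nu_e/p^e$, the sequence $(\nu_e+1)/p^e$ approaches $c$ from above.

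For (1) itself, observe first that $\test{f}{(\nu_e+1)/p^e} \subseteq \idealb$ for every $e$: since $p^e\cdot(\nu_e+1)/p^e = \nu_e+1 \in \NN$, Remark~\ref{stabilizationTI: R} identifies this test ideal with $\ibracket{f^{\nu_e+1}}{e}$, and by the defining minimality of $\ibracket{\cdot}{e}$ this ideal is contained in $\idealb$ precisely because $f^{\nu_e+1} \in \bracket{\idealb}{e}$. Now let $\error > 0$ be as in Remark~\ref{varyingParameter: R}\eqref{constanttotheright: i}, so that $\test{f}{c} = \test{f}{c+\delta}$ for every $0 \leq \delta \leq \error$. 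Since $(\nu_e+1)/p^e$ decreases to $c$, for all $e \gg 0$ we have $c \leq (\nu_e+1)/p^e \leq c+\error$, whence $\test{f}{c} = \test{f}{(\nu_e+1)/p^e} \subseteq \idealb$, which is (1). I expect this step to be the main obstacle: a cruder argument produces only $\test{f}{c+\error} \subseteq \idealb$ for positive $\error$, and it is exactly the monotonicity of $(\nu_e+1)/p^e$ — which lands the sequence inside the right-constancy window of Remark~\ref{varyingParameter: R}\eqref{constanttotheright: i} — that upgrades this to the containment at the threshold value $c$ itself.

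For (2), fix $\lambda \geq 0$; we may assume $\test{f}{\lambda} \neq R$, so in particular $\lambda > 0$. Choose $e$ large enough that $\test{f}{\lambda} = \ibracket{f^{\up{p^e\lambda}}}{e}$; then the definition of $\ibracket{\cdot}{e}$ gives $f^{\up{p^e\lambda}} \in \bracket{\test{f}{\lambda}}{e} \subseteq \test{f}{\lambda}$, and since $\up{p^e\lambda} \geq 1$ this says $f \in \sqrt{\test{f}{\lambda}}$. The same containment forces $\max\set{N \geq 1 : f^N \notin \bracket{\test{f}{\lambda}}{e}} < \up{p^e\lambda} \leq p^e\lambda+1$ for all $e \gg 0$; dividing by $p^e$ and letting $e \to \infty$ gives $\ft{f}{\test{f}{\lambda}} \leq \lambda$.

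For (3), both inclusions are formal consequences of (1), (2), the monotonicity of $\lambda \mapsto \test{f}{\lambda}$ (Remark~\ref{varyingParameter: R}\eqref{textIdealsDecrease: i}), and the evident monotonicity $\ideala \subseteq \idealb \Rightarrow \ft{f}{\ideala} \geq \ft{f}{\idealb}$. If $\xi = \ft{f}{\idealb} > 0$ and, for contradiction, $\test{f}{\xi} = \test{f}{\xi-\error}$ for some $0 < \error < \xi$, then (1) yields $\test{f}{\xi-\error} \subseteq \idealb$, and combining the two monotonicities with (2) applied to $\lambda = \xi-\error$ gives $\xi = \ft{f}{\idealb} \leq \ft{f}{\test{f}{\xi-\error}} \leq \xi-\error$, which is absurd; hence $\xi \in \jnset{R,f}$ (the value $\xi = 0$ lying in $\jnset{R,f}$ by definition). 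Conversely, given $\xi \in \jnset{R,f}$ with $\xi > 0$, take $\idealb = \test{f}{\xi}$; by (2) we have $\ft{f}{\idealb} \leq \xi$, and a strict inequality $\ft{f}{\idealb} = \xi' < \xi$ would give, via (1) together with parameter-monotonicity, $\test{f}{\xi'} \subseteq \test{f}{\xi} \subseteq \test{f}{\xi'}$, i.e.\ $\test{f}{\xi} = \test{f}{\xi'}$; writing $\error = \xi - \xi'$, this contradicts $\xi \in \jnset{R,f}$ provided $0 < \error < \xi$, and indeed $\xi' > 0$ since $\xi' = 0$ would force $\test{f}{\xi} = R$ and hence $\xi \notin \jnset{R,f}$. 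Thus $\ft{f}{\test{f}{\xi}} = \xi$, and $\xi = 0$ is witnessed by $\idealb = R$.
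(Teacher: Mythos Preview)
The paper does not give its own proof of this proposition: it is quoted from \cite{BMS-MMJ} (Proposition~2.29 and Corollary~2.30) and stated without argument.  So there is no in-paper proof to compare against.

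Your argument is correct and is essentially the standard one.  The crucial observation in part~(1) is exactly the one you isolate: from $f^{\nu_e+1}\in\bracket{\idealb}{e}$ one gets $f^{p(\nu_e+1)}\in\bracket{\idealb}{e+1}$, hence $(\nu_e+1)/p^e$ is non-increasing and therefore approaches $c=\ft{f}{\idealb}$ \emph{from above}; combined with Remark~\ref{stabilizationTI: R} (which identifies $\test{f}{(\nu_e+1)/p^e}$ with $\ibracket{f^{\nu_e+1}}{e}\subseteq\idealb$) and the right-constancy of test ideals, this gives $\test{f}{c}\subseteq\idealb$.  Parts~(2) and~(3) are, as you say, soft consequences.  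A couple of minor remarks: in~(2) you may as well note that the containment $f^{\up{p^e\lambda}}\in\bracket{\test{f}{\lambda}}{e}$ holds for \emph{every} $e\geq 1$ (not just $e\gg 0$), since each term of the defining ascending chain is contained in its union and Frobenius powers preserve inclusions; and in~(3), the case $\xi'=0$ is handled more directly by observing that~(1) itself forces $\ft{f}{\idealb}>0$ whenever $\idealb$ is proper (else $R=\test{f}{0}\subseteq\idealb$).  Neither point affects the validity of your argument.
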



\begin{remark}[The $F$-pure threshold realized as an $F$-threshold]
\label{FPTasFT: Rmk}
In the case that $(R,\m)$ is local, the description of $\fpt{f}$ given in Remark \ref{FPT: Rmk} becomes $\fpt{f} = \sup \set{\lambda > 0 : \test{f}{\lambda} \subseteq \m}$, and it follows (e.g., from an argument based on Proposition \ref{F-thresholds: P}) that $\fpt{f} = \ft{f}{\m}$.
\end{remark}

\begin{lemma}\textup{\cite[Lemma 3.3 and Corollary 3.4]{BMS-Hyp}} \label{Lem Ineq} Let $R$ be an $F$-finite regular ring of characteristic $p>0$.  If $\m$ is a maximal ideal of $R$, $\idealb$ is an ideal of $R$ such that $\m^{[p^e]} \subseteq \idealb \subseteq \m$ for some  $e \geq 0,$ and $f, g \in \m$ are such that $f - g \in \m^N$ for some $N>0$, then
\[|\ft{f}{\idealb} - \ft{g}{\idealb} | \leq \frac{p^e \dim(R)}{N}.\]
\end{lemma}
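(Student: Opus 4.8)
The plan is to bound, for each $\ell \in \NN_+$, the quantity $\norm{\nu_f(p^\ell) - \nu_g(p^\ell)}$, where $\nu_f(p^\ell) := \max \set{M \geq 1 : f^M \notin \bracket{\idealb}{\ell}}$ and $\nu_g(p^\ell)$ is defined analogously, and then to divide by $p^\ell$ and let $\ell \to \infty$.  Observe first that, since $\bracket{\m}{e} \subseteq \idealb \subseteq \m$ with $\m$ maximal, the ideal $\idealb$ --- and hence each $\bracket{\idealb}{\ell}$ --- is $\m$-primary, so $\sqrt{\idealb} = \m$ contains both $f$ and $g$; in particular the two $F$-thresholds are defined (the defining limits exist since $f, g \in \sqrt{\idealb}$).

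First I would reduce to the case that $(R,\m)$ is regular local.  Since $\bracket{\idealb}{\ell}$ is $\m$-primary, $\operatorname{Spec}(R/\bracket{\idealb}{\ell}) = \set{\m}$, so the natural map $R/\bracket{\idealb}{\ell} \to R_{\m}/\bracket{\idealb}{\ell}R_{\m}$ is an isomorphism; as bracket powers of finitely generated ideals commute with localization, $f^M \in \bracket{\idealb}{\ell}$ in $R$ if and only if the same containment holds after localizing at $\m$, and likewise for $g$.  Hence $\nu_f(p^\ell)$ and $\nu_g(p^\ell)$ are unchanged upon replacing $R$ by $R_{\m}$, so we may assume $(R,\m)$ is regular local; then $\m$ is generated by $\dim R_{\m} \leq \dim R$ elements.  (If $R$ is a polynomial ring and $\m$ is generated by the variables, this reduction may be skipped.)

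The estimate rests on two elementary inputs.  \emph{Pigeonhole.} Since $\m$ is generated by at most $\dim R =: d$ elements, every monomial of degree exceeding $d(p^\ell - 1)$ in a generating set of $\m$ involves some generator to the power $p^\ell$, so $\m^{d(p^\ell - 1) + 1} \subseteq \bracket{\m}{\ell}$; combining this with $\bracket{\m}{e+\ell} = \bracket{(\bracket{\m}{e})}{\ell} \subseteq \bracket{\idealb}{\ell}$ yields $\m^{d(p^{e+\ell} - 1) + 1} \subseteq \bracket{\idealb}{\ell}$, so that, writing $h := f - g \in \m^N$, we have $h^i \in \m^{Ni} \subseteq \bracket{\idealb}{\ell}$ whenever $Ni \geq d(p^{e+\ell} - 1) + 1$.  \emph{Binomial theorem.} Expanding $f^M = (g + h)^M = \sum_{i=0}^M \binom{M}{i} g^{M-i} h^i$, the $i$-th summand lies in $\bracket{\idealb}{\ell}$ provided that either $M - i > \nu_g(p^\ell)$ (so that $g^{M-i} \in \bracket{\idealb}{\ell}$) or $Ni \geq d(p^{e+\ell}-1)+1$ (so that $h^i \in \bracket{\idealb}{\ell}$).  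The indices $i$ failing both conditions satisfy $M - \nu_g(p^\ell) \leq i \leq d(p^{e+\ell}-1)/N$, a range which is empty as soon as $M > \nu_g(p^\ell) + d(p^{e+\ell}-1)/N$; for such $M$ every summand --- hence $f^M$ itself --- lies in $\bracket{\idealb}{\ell}$.

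It follows that $\nu_f(p^\ell) \leq \nu_g(p^\ell) + d(p^{e+\ell}-1)/N$, and interchanging the roles of $f$ and $g$ gives the reverse bound, so
\[ \norm{\nu_f(p^\ell) - \nu_g(p^\ell)} \leq \frac{d \, (p^{e+\ell} - 1)}{N} < \frac{d \cdot p^{e+\ell}}{N}. \]
Dividing by $p^\ell$ and passing to the limit as $\ell \to \infty$ yields
\[ \norm{\ft{f}{\idealb} - \ft{g}{\idealb}} \leq \frac{d \cdot p^e}{N} = \frac{p^e \dim(R)}{N}, \]
as desired.  I do not anticipate a serious obstacle here: the argument is a pigeonhole estimate spliced onto the binomial theorem.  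The points calling for the most care are the reduction to the local case --- which is what permits bounding the number of generators of $\m$ by $\dim R$ --- and keeping track of the additive constants, all of which disappear once one divides by $p^\ell$ and lets $\ell \to \infty$.
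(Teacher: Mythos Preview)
The paper does not supply its own proof of this lemma; it is simply quoted from the reference [BMS-Hyp, Lemma~3.3 and Corollary~3.4]. Your argument is correct and is essentially the standard one found there: reduce to the local case so that $\m$ has at most $\dim R$ generators, use the pigeonhole inclusion $\m^{d(p^{e+\ell}-1)+1} \subseteq \bracket{\m}{e+\ell} \subseteq \bracket{\idealb}{\ell}$ together with the binomial expansion of $(g+h)^M$ to bound $|\nu_f(p^\ell) - \nu_g(p^\ell)|$, then divide by $p^\ell$ and let $\ell \to \infty$.
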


\section{Computations}
In this section, we describe simple algorithms for computing test ideals and jumping numbers.

\begin{remark}[Bounds for the number of $F$-jumping numbers] \label{RemBounds: R}
Fix a polynomial ring $R$ over an $F$-finite field $\mathbb{L}$, and a polynomial $f \in R$.  As described in 
\enote{Theorem \ref{discretenessRationality: T},}
the number of $F$-jumping numbers of $f$ contained in $[0,1)$ is always finite, and hence bounded above by an integer $B$.  Such bounds will play a key role in our algorithms, and below, we recall some possible choices for $B$.

\begin{enumerate}
\item \label{bounds2} If $\lambda \in [0,1)$, then $\tau(f^{\lambda})$ is generated by polynomials in $\left[R\right]_{\leq \deg(f)}$ (i.e., by polynomials of degree at most $\deg(f)$) \cite[Proposition $3.2$]{BMS-MMJ}.  Consequently, Remark \ref{bijectionJNTI: R} implies that \[ \size{ \jnset{R,f} \cap [0,1) } = \size{ \set{ \test{f}{\lambda} : 0 \leq \lambda < 1 }} \leq \dim_{\mathbb{L}} \left[R\right]_{\leq \deg f} = \binom{\dim R+\deg f}{\dim R}.\] 

\item \label{bounds3} If $\lambda \in[0,1)$, then $\Jac(f) \subseteq \test{f}{\lambda}$ \cite[Theorem 1.3 and Corollary 1.4]{KLZ}.  If $\Jac(f)$ happens to be primary to the homogeneous maximal ideal of $R$, then Remark \ref{bijectionJNTI: R} shows that $\size{ \jnset{R,f} \cap [0,1) }  \leq \dim_{\mathbb{L}} \quot{R}{\Jac(f)} = \Length_R \quot{R}{\Jac(f)}$.
\end{enumerate}
\end{remark}

Proposition \ref{Shape} below, which may be thought of as a generalization of \cite[Proposition 3.8]{BMS-MMJ},  forms the basis for Algorithms \ref{AlgIsoSingTestIdeal} and \ref{AlgIsoSingTodo}, and also plays a key role in the proof of Theorems \ref{ThmSC} and \ref{ThmSC-Ext} in Section \ref{madic: S}.

\begin{proposition}
\label{Shape}  If $R$ is an $F$-finite regular ring, $f \in R$ any element, and $\size{ \jnset{f} \cap [0,1)} \leq B$, then 
\[ \jnset{f} \subseteq \mathscr{A}_B:=\set{ \lambda \in \QQ_+: \text{ there exists} \ (a,b) \in \expset{\lambda}{p} \text{ with } a+b \leq B} \cup \set{ 0 }.  \] 
\end{proposition}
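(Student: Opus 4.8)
The plan is to take an arbitrary $F$-jumping number $\xi \in \jnset{f}$ with $\xi > 0$, and show that $\xi \in \mathscr{A}_B$, i.e., that some pair $(a,b) \in \expset{\xi}{p}$ satisfies $a + b \leq B$. Since $\jnset{f}$ is closed under the maps $\lambda \mapsto p^e \lambda - \down{p^e \lambda}$ into $\jnset{f} \cap [0,1)$ by Lemma \ref{SelfMap: L}, and these maps have a controlled effect on $\expset{\cdot}{p}$, the strategy is to reduce to the case $\xi \in [0,1)$ and then exploit the cardinality bound $B$ directly.

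First I would recall from Corollary \ref{distinctElements: C} that if $\mexp{\xi}{p} = (u,v)$, then the fractional parts $p^e \xi - \down{p^e \xi}$ for $0 \leq e \leq u+v-1$ are $u+v$ \emph{distinct} numbers. By Lemma \ref{SelfMap: L}, each of these fractional parts lies in $\jnset{f} \cap [0,1)$. Hence $\jnset{f} \cap [0,1)$ contains at least $u+v$ distinct elements, which forces $u + v \leq B$. Since $(u,v) = \mexp{\xi}{p} \in \expset{\xi}{p}$, this already exhibits a pair in $\expset{\xi}{p}$ whose coordinate sum is at most $B$, so $\xi \in \mathscr{A}_B$. (The element $0$ is thrown into $\mathscr{A}_B$ by hand, covering the $\xi = 0$ case.) This is essentially the whole argument: the nontrivial content is packaged in Corollary \ref{distinctElements: C} and Lemma \ref{SelfMap: L}, both available from the excerpt.

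I would write this out carefully, being explicit that $\xi \in \QQ_+$ (so that $\expset{\xi}{p}$ and $\mexp{\xi}{p}$ make sense) — this is guaranteed by Theorem \ref{discretenessRationality: T}, which says $F$-jumping numbers are rational. One subtlety to flag is that Lemma \ref{SelfMap: L} is stated for $e \in \NN_+$; for $e = 0$ the map is the identity on $[0,1)$, and in any case $\xi$ itself need not be in $[0,1)$, so when I invoke distinctness of the fractional parts I should note that applying the self-map for $e = 1, \dots, u+v-1$ to $\xi$ lands in $\jnset{f}$, and applying it once more (or using $\xi - \down{\xi} \in \jnset{f}$ via Remark \ref{varyingParameter: R}\eqref{Skoda: i}) handles $e = 0$; all $u+v$ of these values are in $\jnset{f} \cap [0,1)$ and they are pairwise distinct.

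The main obstacle — such as it is — is purely bookkeeping: making sure the $u+v$ distinct fractional parts are genuinely all inside $\jnset{f} \cap [0,1)$ and genuinely pairwise distinct, so that the counting inequality $u+v \leq B$ is airtight. Corollary \ref{distinctElements: C} gives the distinctness outright, and Lemma \ref{SelfMap: L} (together with the $e=0$ remark above) gives membership, so no real difficulty remains; the proof should be only a few lines.
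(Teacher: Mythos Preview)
Your proposal is correct and follows essentially the same argument as the paper's proof: take $\mexp{\xi}{p}=(u,v)$, invoke Corollary~\ref{distinctElements: C} for distinctness and Lemma~\ref{SelfMap: L} for membership in $\jnset{f}\cap[0,1)$, and conclude $u+v\leq B$. Your extra care with the $e=0$ case (via Remark~\ref{varyingParameter: R}\eqref{Skoda: i}) and the explicit appeal to rationality (Theorem~\ref{discretenessRationality: T}) are points the paper leaves implicit.
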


\begin{proof}
If $0 \neq \lambda \in \jnset{f}$ and $\mexp{\lambda}{p}  = (u,v)$, then Corollary \ref{distinctElements: C} and Lemma \ref{SelfMap: L} show that \[\set{ p^e \lambda - \down{p^e \lambda} : 0 \leq e \leq u+v-1}\] is a subset of $\jnset{f} \cap [0,1)$ of cardinality $u+v$, so that $u+v \leq \size{\jnset{f} \cap [0,1)} \leq B$.
\end{proof}

\subsection{Stabilization of chains of test ideals}  In this subsection, we identify when two chains of test ideals (one increasing, the other decreasing) stabilize. 

\begin{remark} \label{stablizationUnion: R}
As $\up{p^e \lambda} = p^e \tr{\lambda}{e} + 1$, by definition, Definition \ref{TI: D} and Remark \ref{stabilizationTI: R} imply that \[\test{f}{\lambda} = \bigcup_{e \geq 1} \ibracket{f^{\up{p^e \lambda}}}{e} = \bigcup_{e \geq 1} \ibracket{f^{p^e \tr{\lambda}{e}+1}}{e} = \bigcup_{e \geq 1} \test{f}{\tr{\lambda}{e} + p^{-e}},\]
which equals $\test{f}{\tr{\lambda}{e} + p^{-e}}$  for $e \gg 0$. 
\end{remark}

\begin{remark} \label{stablizationIntersection: R}
By Remark \ref{varyingParameter: R} \eqref{textIdealsDecrease: i}, the test ideals $\test{f}{\lambda-\error}$ decrease as $\error \searrow 0$.  Moreover, when considering the intersection of these nested ideals, by the same remark, we can replace the sequence $\lambda - \error$ as $\error \searrow 0$ with \emph{any} sequence converging to $\lambda$ from below, so that $\bigcap_{ 0 < \varepsilon < \lambda} \tau(f^{\lambda-\varepsilon}) = \bigcap_{e \geq 1} \test{f}{\tr{\lambda}{e}}.$
Theorem \ref{discretenessRationality: T} implies that there are only finitely many different test ideals as the parameter varies within the interval $[0,\lambda)$, and so this intersection is eventually stable;  i.e., 
\[  \bigcap_{ 0 < \varepsilon < \lambda} \tau(f^{\lambda-\varepsilon}) = \bigcap_{e \geq 1} \test{f}{\tr{\lambda}{e}} = \test{f}{\tr{\lambda}{e}} \text{ for $e \gg 0$}. \] 
\end{remark}
 
In the following proposition, we find points at which the sequences of ideals in the union in Remark \ref{stablizationUnion: R} and in the intersection on Remark \ref{stablizationIntersection: R} must stabilize.

\renewcommand{\stop}{s}

\begin{proposition}\label{ComputationTestIdeal:P}
 Fix any element $f$ of an $F$-finite regular ring, and a positive integer $B$ for which $\size{ \jnset{f} \cap [0,1)} \leq B$.  Given $\lambda \in \QQ_+$ and $(u,v) \in \expset{\lambda}{p}$, if $\stop=u+vB$, then \[ (\tr{\lambda}{\stop}, \lambda) \cup \(\lambda, \tr{\lambda}{\stop} + {p^{-\stop}} \ \right] \]  contains no element of $\jnset{f}$.  In particular, $\bigcap \limits_{ 0 < \varepsilon < \lambda} \tau(f^{\lambda-\varepsilon}) = \displaytest{f}{\tr{\lambda}{\stop}} \text{ and } \test{f}{\lambda}   =  \displaytest{f}{\tr{\lambda}{\stop}+p^{-\stop}}$.
\end{proposition}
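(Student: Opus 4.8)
The plan is to deduce the first assertion of the proposition almost immediately from Corollary~\ref{sumOfExponents: C} and Proposition~\ref{Shape}, and then to bootstrap the two stabilization identities from it using only the monotonicity of test ideals (Remark~\ref{varyingParameter: R}~\eqref{textIdealsDecrease: i}) and the discreteness of $\jnset{f}$ (Theorem~\ref{discretenessRationality: T}). For the first assertion, I would apply Corollary~\ref{sumOfExponents: C} with $w=B$: since $u+vw=u+vB=s$, the set $(\tr{\lambda}{u+vw},\lambda)\cup(\lambda,\tr{\lambda}{u+vw}+p^{-u-vw}]$ appearing there is \emph{precisely} the set in the statement. Suppose, towards a contradiction, that some $\gamma\in\jnset{f}$ lies in this set. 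Since $\lambda>0$ and $\tr{\lambda}{s}\geq 0$, this set consists of positive reals, so $\gamma\neq 0$. Corollary~\ref{sumOfExponents: C} then gives $a+b>w=B$ for every $(a,b)\in\expset{\gamma}{p}$. On the other hand, the hypothesis $\size{\jnset{f}\cap[0,1)}\leq B$ lets us apply Proposition~\ref{Shape}, which puts $\gamma\in\mathscr{A}_B$; as $\gamma\neq 0$, there is therefore some $(a,b)\in\expset{\gamma}{p}$ with $a+b\leq B$ --- a contradiction. Hence the displayed set contains no $F$-jumping number of $f$.

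For the ``in particular'' part I would use the standard fact that $\lambda\mapsto\test{f}{\lambda}$ is locally constant off the $F$-jumping numbers, in the form: if $\nu_1\leq\nu_2$ and $(\nu_1,\nu_2]\cap\jnset{f}=\emptyset$, then $\test{f}{\nu_1}=\test{f}{\nu_2}$ (a routine consequence of Remark~\ref{varyingParameter: R}~\eqref{textIdealsDecrease: i},~\eqref{constanttotheright: i} and Theorem~\ref{discretenessRationality: T}, obtained by the usual ``step to the next jumping number'' argument; it may also be read off Remark~\ref{bijectionJNTI: R}). Granting this: by Remark~\ref{stablizationIntersection: R}, $\bigcap_{0<\error<\lambda}\test{f}{\lambda-\error}=\bigcap_{e\geq 1}\displaytest{f}{\tr{\lambda}{e}}$, and by Remark~\ref{obvious: R} the $\tr{\lambda}{e}$ form a non-decreasing sequence converging to $\lambda$, so for each $e\geq s$ we have $\tr{\lambda}{s}\leq\tr{\lambda}{e}<\lambda$, whence $(\tr{\lambda}{s},\tr{\lambda}{e}]\subseteq(\tr{\lambda}{s},\lambda)$ contains no $F$-jumping number and the constancy property yields $\displaytest{f}{\tr{\lambda}{e}}=\displaytest{f}{\tr{\lambda}{s}}$; since also $\displaytest{f}{\tr{\lambda}{e}}\supseteq\displaytest{f}{\tr{\lambda}{s}}$ when $e<s$ (Remark~\ref{varyingParameter: R}~\eqref{textIdealsDecrease: i}), we conclude $\bigcap_{0<\error<\lambda}\test{f}{\lambda-\error}=\displaytest{f}{\tr{\lambda}{s}}$. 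Dually, by Remark~\ref{stablizationUnion: R}, $\test{f}{\lambda}=\bigcup_{e\geq 1}\displaytest{f}{\tr{\lambda}{e}+p^{-e}}$; now $\tr{\lambda}{e}+p^{-e}=\up{p^e\lambda}/p^e$ satisfies $\tr{\lambda}{e}+p^{-e}\geq\lambda$ and is non-increasing in $e$ (since $\up{p^{e+1}\lambda}\leq p\up{p^e\lambda}$), so for $e\geq s$ it lies in $[\lambda,\tr{\lambda}{s}+p^{-s}]$ and $(\tr{\lambda}{e}+p^{-e},\tr{\lambda}{s}+p^{-s}]\subseteq(\lambda,\tr{\lambda}{s}+p^{-s}]$ contains no $F$-jumping number, so the constancy property gives $\displaytest{f}{\tr{\lambda}{e}+p^{-e}}=\displaytest{f}{\tr{\lambda}{s}+p^{-s}}$; since $\displaytest{f}{\tr{\lambda}{e}+p^{-e}}\subseteq\displaytest{f}{\tr{\lambda}{s}+p^{-s}}$ for $e<s$, we obtain $\test{f}{\lambda}=\displaytest{f}{\tr{\lambda}{s}+p^{-s}}$.

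The proof has no genuinely hard step. The one point requiring care is the bookkeeping in the first assertion: one must observe that Corollary~\ref{sumOfExponents: C}, applied with the \emph{specific} choice $w=B$, returns exactly the interval named in the statement --- with no slack to spare --- which is precisely what makes the hypothesis $\size{\jnset{f}\cap[0,1)}\leq B$ strong enough to contradict the conclusion of that corollary through Proposition~\ref{Shape}. Everything after that is routine manipulation of monotone chains of test ideals, the sole external input being the standard local-constancy property recalled above (which, if one prefers, can also be re-established inline in a few lines from the facts already cited in Section~2.2).
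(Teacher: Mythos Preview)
Your proof is correct and follows essentially the same approach as the paper: the first assertion is obtained exactly as in the paper by combining Corollary~\ref{sumOfExponents: C} (with $w=B$) and Proposition~\ref{Shape}, and the two stabilization identities are deduced from the absence of $F$-jumping numbers on the relevant intervals via the local-constancy principle. The only cosmetic difference is that for the identity $\test{f}{\lambda}=\test{f}{\tr{\lambda}{s}+p^{-s}}$ the paper argues directly on the interval $(\lambda,\tr{\lambda}{s}+p^{-s}]$ (treating the case $p^s\lambda\in\NN$ separately), whereas you pass through the union representation of Remark~\ref{stablizationUnion: R} and the monotonicity of $e\mapsto\up{p^e\lambda}/p^e$; both routes are straightforward.
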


\begin{proof} If $\gamma \in \jnset{f}$ is contained in either of these intervals, then setting $w=B$ in Corollary \ref{sumOfExponents: C} implies that $a+b > B$ for every $(a,b) \in \expset{\gamma}{p}$, which directly contradicts Proposition \ref{Shape}.

Regarding $\bigcap_{0 < \error < \lambda} \tau(f^{\lambda-\varepsilon})$: As $(\tr{\lambda}{\stop}, \lambda)$ contains no $F$-jumping numbers, $\test{f}{\gamma} = \test{f}{\tr{\lambda}{\stop}}$ for every $\gamma \in (\tr{\lambda}{\stop}, \lambda)$, and it follows that $\bigcap_{ 0 < \varepsilon < \lambda} \tau(f^{\lambda-\varepsilon}) = \bigcap_{e \geq 1} \displaytest{f}{\tr{\lambda}{e}} = \displaytest{f}{\tr{\lambda}{\stop}}$.

Regarding $\test{f}{\lambda}$:  If $p^{\stop} \lambda \in \NN$, then $\lambda = \tr{\lambda}{\stop} + p^{-\stop}$, and our assertion is trivial.  If, instead, $p^{\stop} \lambda \notin \NN$, the (non-empty) interval $(\lambda, \tr{\lambda}{\stop} + p^{-\stop}]$ contains no $F$-jumping numbers, and hence $\test{f}{\lambda} = \test{f}{\gamma}$ for every 
$\gamma$ in this interval;  our assertion then follows by choosing $\gamma = \tr{\lambda}{\stop} + p^{-\stop}$.
\end{proof}

\subsection{Algorithms}
Throughout this subsection, we fix an element $f$ of an $F$-finite polynomial ring, and for every $B \in \NN_+$, we let $\mathscr{A}_B$ denote the set described in Proposition \ref{Shape}.

Proposition \ref{ComputationTestIdeal:P} immediately gives rise to the following simple algorithms.  

\begin{alg}[An algorithm for determining whether a candidate $F$-jumping number is in $\jnset{f}$] \
\label{AlgIsoSingTestIdeal}
\begin{description}
\item[Input] An integer bound $B$ for $\size{\jnset{f} \cap [0,1)}$ (e.g., as described in Remark \ref{RemBounds: R}), and a candidate $F$-jumping number $\lambda \in \mathscr{A}_B$.
\item[Output]  TRUE if $\lambda \in \jnset{f}$, and FALSE otherwise.

\item[Process]
Compute each of the following quantities:
\begin{enumerate}
\item $\mexp{\lambda}{p} = (u,v)$ as described in Remark \ref{mexp: R}, and  $\stop:=u+vB$.
\item $\test{f}{\tr{\lambda}{\stop}}$ and $\displaytest{f}{\tr{\lambda}{\stop} + p^{-\stop}}$ as described in Remarks \ref{ibracketcomputation: R} and \ref{stabilizationTI: R}.
\end{enumerate}
\item[Return] TRUE if $\test{f}{\tr{\lambda}{\stop}} \neq \test{f}{\tr{\lambda}{\stop} + p^{-\stop}}$, and FALSE otherwise.
\end{description}
\end{alg}

As described in Remark \ref{varyingParameter: R}, the test ideals corresponding to parameters in $[0,1)$ completely determine the set of all test ideals.  Thus, given an integer $B$ as above, we may run Algorithm \ref{AlgIsoSingTestIdeal} to check whether every candidate $F$-jumping number in $[0,1)$, i.e., every element of the finite set $\mathscr{A}_B \cap [0,1)$, is an actual $F$-jumping number, and may thus completely determine $\jnset{f} \cap [0,1)$.  Below, we describe a slightly more efficient way to compute $\jnset{f} \cap [0,1)$.

\begin{alg}[An algorithm for computing all test ideals and $F$-jumping numbers in $[0,1)$] \ 
\label{AlgIsoSingTodo} 
\begin{description}
\item[Input] An integer bound $B$ for $\size{\jnset{f} \cap [0,1)}$ (e.g., as described in Remark \ref{RemBounds: R}).
\item[Output]  The set $\jnset{f} \cap [0,1)$, as well as the test ideals $\set{ \test{f}{\lambda}: \lambda \in [0,1)}$.
\item[Process]  Order the elements of the finite set $\mathscr{A}_B \cap [0,1)$ as $0 = \lambda_0 < \lambda_1 < \ldots < \lambda_s$, and for every $1 \leq k \leq s$, compute the following quantities:
\begin{enumerate}
\item $\mexp{\lambda_k}{p} = (u_k,v_k)$ as described in Remark \ref{mexp: R}, and $\stop_k = u_k + v_k B$.
\item $\test{f}{\lambda_k} = \displaytest{f}{\tr{\lambda_k}{\stop_k} + p^{-\stop_k}}$ as described in Remarks \ref{ibracketcomputation: R} and \ref{stabilizationTI: R}.
\end{enumerate}
\item[Return] $\jnset{f}\cap [0,1) = \set{ \lambda_k :  1 \leq k \leq s \text{ and } \test{f}{\lambda_k} \neq \test{f}{\lambda_{k-1}}} \cup \set{0}$ and $\set{\test{f}{\lambda} : \lambda \in [0,1)} = \set{ \test{f}{\lambda_k} : \lambda_k \in \jnset{f} \cap [0,1)}$.
\end{description}
\end{alg}

\begin{example}
If $f = x^4 + y^3 + x^2 y^2 \in R=\FF_5 [x,y]$, then $\Length_R \quot{R}{\Jac(f)} = 6$, and using our algorithms, we find that the $F$-jumping numbers of $f$ in $(0,1)$ are $\frac{7}{12}$, $\frac{4}{5}$, and $\frac{11}{12}$; moreover, 
\begin{equation*}
\tau\( f^{\frac{7}{12}} \) = (x,y), \ \ \ 
\tau\( f^{\frac{4}{5}} \) = (x^2, y), \ \ \ \text{and} \ \ \
\tau\( f^{\frac{11}{12}} \) = (x^2, xy, y^2). 
\end{equation*}


\end{example}

We point out that for large $p$ or large bounds $B$, the algorithms given are not feasible. For more effective methods for computing $F$-pure thresholds in certain cases, we refer the reader to \cite{Diagonals,Binomials,QuasiHomog}. Additionally, we refer the reader to \cite{ST-NonPrincipal} for computations of test ideals in a more general (e.g., non-principal) setting.
\section{Local $\m$-adic constancy of $F$-pure thresholds and test ideals}
\label{madic: S}

In this section we prove our main results, Theorems \ref{ThmSC} and \ref{ThmSC-Ext}.

\begin{remark}
\label{LemDifference}
If $\mathscr{A}_B =\set{ \lambda \in \QQ_+: \exists \ (a,b) \in \expset{\lambda}{p} \text{ with } a+b \leq B} \cup \set{ 0 }$ is the set appearing in Proposition \ref{Shape}, then the difference between any two distinct elements of $\mathscr{A}_B$ is greater than $\frac{1}{p^{2B}}$, in absolute value:  If $\alpha > \beta$ are elements of $\mathscr{A}_B$, then there exist $(a,b) \in \expset{\alpha}{p}$ and $(u,v) \in \expset{\beta}{p}$ with $a+b$ and $u+v$ less than or equal to $B$.   It follows that $p^a(p^b-1)p^u(p^v-1)$ is a common denominator for $\alpha$ and $\beta$, and as $\alpha - \beta >0$, we have that
\[ \alpha - \beta \geq \frac{1}{p^a(p^b-1)p^u(p^v-1)} > \frac{1}{p^{a+b+u+v}} \geq \frac{1}{p^{2B}}.\]
\end{remark}

\begin{lemma} \label{boundSum: L}
Fix an $F$-finite polynomial ring $R$ with homogeneous maximal ideal $\m$,  a polynomial $f$ with $\sqrt{\Jac(f)} = \m$, and set $S=R_{\m}$.  If $\ell = \Length_R \quot{R}{\Jac(f)}$, and $h \in \m^{\ell+3}$, then
\begin{enumerate}
 \item $\Jac(f) \cdot S = \Jac(f+h) \cdot S$, and \label{sameJ: i}
 \item both $\size{\jnset{S, f} \cap [0,1)}$ and $\size{\jnset{S, f+h}\cap [0,1)}$ are bounded above by $\ell$.
\item In particular, $\jnset{S, f} \cup \ \jnset{S, f+h} \subseteq \mathscr{A}_{\ell}$, the set described in Proposition \ref{Shape}.  \label{sameset: i}
\end{enumerate}

\end{lemma}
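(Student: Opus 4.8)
The plan is to handle the three assertions in order, with part~\eqref{sameJ: i} carrying the essential content and part~\eqref{sameset: i} following formally from the second assertion and Proposition~\ref{Shape}. Write $\mathfrak{n} = \m S$ for the maximal ideal of the regular local ring $S = R_\m$. Since $\sqrt{\Jac(f)} = \m$, the finite-length $R$-module $\quot{R}{\Jac(f)}$ is supported only at $\m$, hence is unaffected by localizing at $\m$; thus $\Jac(f)S$ is $\mathfrak{n}$-primary with $\Length_S \quot{S}{\Jac(f)S} = \Length_R \quot{R}{\Jac(f)} = \ell$, and the standard descending-chain-plus-Nakayama argument gives $\mathfrak{n}^{\ell} \subseteq \Jac(f)S$. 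This containment is the only tool needed below.

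For part~\eqref{sameJ: i}, the idea is that $h$ and each of its partials $\partial_1 h, \ldots, \partial_n h$ sit so deep in $\mathfrak{n}$ that they are absorbed by $\mathfrak{n}\cdot\Jac(f)S$. Indeed, differentiation lowers degree by at most one, so $h \in \m^{\ell+3}$ gives $\partial_i h \in \m^{\ell+2}$; passing to $S$ and using $\mathfrak{n}^{\ell} \subseteq \Jac(f)S$ yields
\[ h,\ \partial_1 h,\ \ldots,\ \partial_n h \ \in\ \mathfrak{n}^{\ell+2}\ =\ \mathfrak{n}^{2}\cdot\mathfrak{n}^{\ell}\ \subseteq\ \mathfrak{n}\cdot\Jac(f)S. \]
Now set $J = \Jac(f)S$ and $J' = \Jac(f+h)S$, and compare the generators $f, \partial_1 f, \ldots, \partial_n f$ of $J$ with the generators $f+h, \partial_1 f + \partial_1 h, \ldots, \partial_n f + \partial_n h$ of $J'$: the displayed containment gives at once $J' \subseteq J$ and $J \subseteq J' + \mathfrak{n} J$, hence $J = J' + \mathfrak{n} J$. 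Since $\quot{J}{J'}$ is a finitely generated $S$-module, Nakayama's lemma forces $J = J'$, which is part~\eqref{sameJ: i}.

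For the second assertion, I would combine part~\eqref{sameJ: i} with the containment $\Jac(g) \subseteq \test{g}{\lambda}$, valid for all $\lambda \in [0,1)$ by Remark~\ref{RemBounds: R}\eqref{bounds3}. Localizing and applying Proposition~\ref{commutesWithLocalization: P} gives, for $g \in \{f, f+h\}$,
\[ \Jac(f)S \ \subseteq\ \testR{S}{g}{\lambda}\ \subseteq\ S \qquad (0 \le \lambda < 1), \]
where for $g = f+h$ one replaces $\Jac(f+h)S$ by $\Jac(f)S$ using part~\eqref{sameJ: i} (this is also what guarantees the left-hand ideal has colength exactly $\ell$, not merely finite colength). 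By Remark~\ref{varyingParameter: R}\eqref{textIdealsDecrease: i} the ideals $\testR{S}{g}{\lambda}$, $0 \le \lambda < 1$, form a descending chain trapped between $\Jac(f)S$ and $S$, and $\quot{S}{\Jac(f)S}$ has length $\ell$; reasoning as in Remark~\ref{RemBounds: R}, such a chain has at most $\ell$ distinct terms, and then the bijection of Remark~\ref{bijectionJNTI: R} bounds $\size{\jnset{S,g}\cap[0,1)}$ by $\ell$. Part~\eqref{sameset: i} is then immediate: applying Proposition~\ref{Shape} with $B = \ell$ to $f$ and to $f+h$ gives $\jnset{S,f}\subseteq\mathscr{A}_\ell$ and $\jnset{S,f+h}\subseteq\mathscr{A}_\ell$.

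The one step that demands an idea rather than a citation is part~\eqref{sameJ: i}: the hypothesis $h \in \m^{\ell+3}$ is calibrated precisely so that $h$ and its first partials land inside $\mathfrak{n}\cdot\Jac(f)S$, at which point the equality of Jacobian ideals collapses to a single invocation of Nakayama. Everything after that — the localized test-ideal containment, the chain count, and Proposition~\ref{Shape} — is routine bookkeeping with results already available.
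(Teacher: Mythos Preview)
Your proof is correct and follows essentially the same route as the paper's: both show that $h$ and its partials lie in $\mathfrak{m}\cdot\Jac(f)$ (after localizing, in your version), deduce $\Jac(f)S=\Jac(f+h)S$ via Nakayama, and then feed the containment $\Jac(g)S\subseteq\testR{S}{g}{\lambda}$ into the length bound and Proposition~\ref{Shape}. The only cosmetic differences are that you localize from the outset and use the sharper inclusion $\mathfrak{n}^{\ell}\subseteq\Jac(f)S$, whereas the paper works in $R$ with $\m^{\ell+1}\subseteq\Jac(f)$ before passing to $S$.
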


\begin{proof} 
By the definition of $\ell$, we have that $\m^{\ell+1}\subseteq \Jac(f)$.  
\enote{Since $h \in \m^{\ell+3}$, } 
the polynomial $h$ and all of its partial derivatives are contained in $\m^{\ell+2}$, and hence in $\m\cdot\Jac(f)$.  It follows from this observation, the identity $f = (f+h) -h$, and the analogous identity relating partial derivatives, that 
 \[ \Jac(f) =  \Jac(f) + \m \cdot \Jac(f) = \Jac(f+h) + \m \cdot \Jac(f).\]    
The same is true after passing to the localization $S$, and it now follows from a well-known consequence\footnote{Namely, if $(A, \mathfrak{n})$ is local and $M = N + \mathfrak{n} \cdot M$ for some finitely generated $A$-modules $M$ and $N$, then $M=N$.}   of Nakayama's lemma that  $\Jac(f) \cdot S = \Jac(f+h) \cdot S$.

For every $\lambda \in [0,1)$, Remark \ref{RemBounds: R} \eqref{bounds3} and Proposition \ref{commutesWithLocalization: P} show that 
\[ \Jac(f) \cdot S \subseteq \testR{R}{f}{\lambda} \cdot S  = \testR{S}{f}{\lambda},\] and, similarly, that 
$\Jac(f) \cdot S = \Jac(f+h) \cdot S  \subseteq \testR{R}{(f+h)}{\lambda} \cdot S  = \testR{S}{(f+h)}{\lambda}$.  
These containments and Remark \ref{bijectionJNTI: R} imply that $\size{ \jnset{S,f} \cap [0,1) }$ and $\size{ \jnset{S,f+h} \cap [0,1) }$ are both bounded above by the length of $\quot{S}{\Jac(f+h) \cdot S} = \quot{S}{\Jac(f) \cdot S}$ as an $S$ module.
Finally, the last assertion follows from setting $B= \ell$ in Proposition \ref{Shape}.
\end{proof}

\begin{theorem}\label{ThmSC}
Given an $F$-finite polynomial ring $R$ with graded maximal ideal $\m$,
fix $f \in R$ such that $\sqrt{\Jac(f)} = \m$, and let $S=R_{\m}$.
If $\ell = \Length_R \quot{R}{\Jac(f)}$ and $N=p^{2 \ell} \dim(R)$, then \[\fpt{S, f} = \fpt{S, f+h} \ \text{ for every } \ h \in \m^{N}.\]
\end{theorem}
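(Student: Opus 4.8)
The plan is to play two quantitative estimates against each other. On one hand, Lemma~\ref{Lem Ineq} controls how much an $F$-threshold can move under an $\m$-adic perturbation; on the other, Remark~\ref{LemDifference} says that distinct elements of the ambient set $\mathscr{A}_\ell$ (in which both thresholds will be shown to live) are bounded apart. The choice $N = p^{2\ell}\dim(R)$ is precisely what makes the first estimate no larger than the second, which forces the two thresholds to coincide. Concretely, I would identify $\fpt{S,f}$ with the $F$-threshold $\ft{f}{\m}$ (and likewise for $f+h$), apply Lemma~\ref{Lem Ineq} to obtain $\norm{\fpt{S,f} - \fpt{S,f+h}} \leq 1/p^{2\ell}$, and then invoke Lemma~\ref{boundSum: L} and Remark~\ref{LemDifference} to see that such a gap is too small to separate two distinct points of $\mathscr{A}_\ell$.

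First I would record the routine bookkeeping. Since $\sqrt{\Jac(f)} = \m$ is a proper ideal, $\quot{R}{\Jac(f)} \neq 0$, so $\ell \geq 1$; hence $N = p^{2\ell}\dim(R) \geq 4^{\ell} \geq \ell + 3$, and therefore $\m^N \subseteq \m^{\ell+3}$. This means Lemma~\ref{boundSum: L} applies to every $h \in \m^N$, giving in particular (part~\eqref{sameset: i} there) that $\jnset{S,f} \cup \jnset{S,f+h} \subseteq \mathscr{A}_\ell$, where $\mathscr{A}_\ell$ is the set of Proposition~\ref{Shape} with $B = \ell$. In particular, the positive $F$-jumping numbers $\fpt{S,f}$ and $\fpt{S,f+h}$ both belong to $\mathscr{A}_\ell$.

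Next I would make the passage from the $F$-pure threshold to an $F$-threshold precise. For any $g \in \m$, the integers $\max\set{ n \geq 1 : g^n \notin \bracket{\m}{e}}$ are unchanged upon localizing at $\m$ (because $\bracket{\m}{e}$ is $\m$-primary), so the $F$-threshold $\ft{g}{\m}$ computed in $R$ equals the one computed in $S = R_\m$; combined with Remarks~\ref{FPT: Rmk} and~\ref{FPTasFT: Rmk} applied to the local ring $S$, this gives $\fpt{S,f} = \ft{f}{\m}$ and $\fpt{S,f+h} = \ft{f+h}{\m}$, with the $F$-thresholds computed in $R$. Now apply Lemma~\ref{Lem Ineq} with the maximal ideal $\m$, with $\idealb = \m$ (so that $\bracket{\m}{0} = \m$, i.e.\ $e = 0$ works), and with the elements $f$ and $f+h$, whose difference lies in $\m^N$: this yields $\norm{\fpt{S,f} - \fpt{S,f+h}} \leq \dim(R)/N = 1/p^{2\ell}$. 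But by Remark~\ref{LemDifference} (with $B = \ell$), any two distinct elements of $\mathscr{A}_\ell$ differ by strictly more than $1/p^{2\ell}$; since $\fpt{S,f}, \fpt{S,f+h} \in \mathscr{A}_\ell$, they cannot be distinct, and the theorem follows.

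I do not expect a genuine obstacle here, since every input is already in place. The only points requiring a little care are the numerical inequality $N \geq \ell + 3$ (needed so that Lemma~\ref{boundSum: L} applies) and keeping track of whether $F$-thresholds and test ideals are being taken in $R$ or in $S = R_\m$; the latter is harmless because the formation of test ideals commutes with localization (Proposition~\ref{commutesWithLocalization: P}), so one could alternatively carry out the entire argument over $S$ from the outset.
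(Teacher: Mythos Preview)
Your proposal is correct and follows essentially the same route as the paper: identify the $F$-pure thresholds with $F$-thresholds at the maximal ideal, bound their difference by $1/p^{2\ell}$ via Lemma~\ref{Lem Ineq} with $e=0$, and then use Lemma~\ref{boundSum: L}\eqref{sameset: i} together with Remark~\ref{LemDifference} to force equality. You are in fact slightly more careful than the paper in explicitly checking $N \geq \ell+3$ (so that Lemma~\ref{boundSum: L} applies) and in tracking whether the $F$-thresholds live in $R$ or $S$, but these are just details the paper leaves implicit.
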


\begin{proof}
Let $\mathfrak{n} = \m S$ denote the maximal ideal of $S$.  As detailed in Remark \ref{FPTasFT: Rmk}, the $F$-pure thresholds in question may be described as the $F$-thresholds of $f$ and $f+h$ with respect to the ideal $\mathfrak{n}$, and setting $e=0$ and $g=f+h$ in Lemma \ref{Lem Ineq} 
then shows that 
\[ \left| \fpt{S,f} - \fpt{S,f+h} \right| \leq \frac{\dim(R)}{ p^{2 \ell} \dim(R)} = \frac{1}{p^{2 \ell}}.\]  
Finally, Lemma \ref{boundSum: L} \eqref{sameset: i} and Remark \ref{LemDifference} (with $B = \ell$) show that this is impossible unless these two $F$-jumping numbers are equal.
\end{proof}

In the last result of this paper, Theorem \ref{ThmSC-Ext}, we will see that increasing the value of $N$ in Theorem \ref{ThmSC} allows one to arrive at a much stronger conclusion.  In establishing this result, we will require the following lemma, which translates the issue of showing equality of test ideals to one of showing equality of various $F$-thresholds.

\begin{lemma}
\label{FTTI: L}

If $f$ and $g$ are elements of an $F$-finite regular ring such that $\ft{f}{\test{f}{\lambda}} = \ft{g}{\test{f}{\lambda}}$, then $\test{g}{\lambda} \subseteq \test{f}{\lambda}$.  In particular, if $\ft{f}{\test{f}{\lambda}} = \ft{g}{\test{f}{\lambda}}$ and $\ft{f}{\test{g}{\lambda}} = \ft{g}{\test{g}{\lambda}}$, then $\test{f}{\lambda} = \test{g}{\lambda}$.
\end{lemma}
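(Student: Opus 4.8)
The plan is to prove the first assertion and then deduce the "in particular" statement by symmetry. So suppose $\ft{f}{\test{f}{\lambda}} = \ft{g}{\test{f}{\lambda}}$ and set $\idealb = \test{f}{\lambda}$. The key point is Proposition \ref{F-thresholds: P}: part \eqref{Fthres2: i} applied to $f$ gives $f \in \sqrt{\idealb}$ and $\ft{f}{\idealb} \leq \lambda$. By our hypothesis, $\ft{g}{\idealb} = \ft{f}{\idealb} \leq \lambda$ as well; in particular $g \in \sqrt{\idealb}$, so the $F$-threshold $\ft{g}{\idealb}$ is defined. Now I want to run $g$ through part \eqref{Fthres1: i} of that same proposition, which says $\test{g}{\ft{g}{\idealb}} \subseteq \idealb$. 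Combining with $\ft{g}{\idealb} \leq \lambda$ and the monotonicity of test ideals, Remark \ref{varyingParameter: R}\eqref{textIdealsDecrease: i}, which gives $\test{g}{\lambda} \subseteq \test{g}{\ft{g}{\idealb}}$, we obtain
\[ \test{g}{\lambda} \subseteq \test{g}{\ft{g}{\idealb}} \subseteq \idealb = \test{f}{\lambda}, \]
which is exactly the claimed containment.

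For the second statement, apply the first assertion twice. The first hypothesis $\ft{f}{\test{f}{\lambda}} = \ft{g}{\test{f}{\lambda}}$ yields $\test{g}{\lambda} \subseteq \test{f}{\lambda}$ as above. The second hypothesis $\ft{f}{\test{g}{\lambda}} = \ft{g}{\test{g}{\lambda}}$ is the same statement with the roles of $f$ and $g$ interchanged (the ideal in question is now $\test{g}{\lambda}$, and one needs $g \in \sqrt{\test{g}{\lambda}}$, which is again Proposition \ref{F-thresholds: P}\eqref{Fthres2: i}); it therefore yields $\test{f}{\lambda} \subseteq \test{g}{\lambda}$. The two containments together give $\test{f}{\lambda} = \test{g}{\lambda}$.

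I don't anticipate a real obstacle here: the proof is essentially a bookkeeping exercise that threads together the two halves of Proposition \ref{F-thresholds: P} with the monotonicity of test ideals. The one subtlety worth being careful about is making sure that each $F$-threshold appearing is actually defined, i.e., that the relevant containment $f \in \sqrt{\idealb}$ or $g \in \sqrt{\idealb}$ holds before invoking properties of $\ft{\,\cdot\,}{\idealb}$; but this is guaranteed at each step by Proposition \ref{F-thresholds: P}\eqref{Fthres2: i}. A second minor point is that the hypothesis equating the two $F$-thresholds is what lets us transfer the bound $\ft{f}{\idealb} \leq \lambda$ from $f$ to $g$ — without it, $\ft{g}{\idealb}$ could exceed $\lambda$ and the argument would break.
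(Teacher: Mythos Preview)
Your proof is correct and follows essentially the same route as the paper's: both use Proposition~\ref{F-thresholds: P}\eqref{Fthres2: i} to get $\ft{f}{\test{f}{\lambda}}\leq\lambda$, transfer this bound to $g$ via the hypothesis, then combine monotonicity of test ideals with Proposition~\ref{F-thresholds: P}\eqref{Fthres1: i} to obtain the chain $\test{g}{\lambda}\subseteq\test{g}{\ft{g}{\idealb}}\subseteq\idealb$, and finish the second part by symmetry. The only cosmetic difference is that the paper writes the chain as $\test{g}{\lambda}\subseteq\test{g}{\ft{f}{\test{f}{\lambda}}}=\test{g}{\ft{g}{\test{f}{\lambda}}}\subseteq\test{f}{\lambda}$, inserting the hypothesis in the middle rather than at the outset.
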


\begin{proof}
By Proposition \ref{F-thresholds: P} \eqref{Fthres2: i},  $\ft{f}{\test{f}{\lambda}} \leq \lambda$.   This inequality accounts for the first inclusion in 
\[ \test{g}{\lambda} \subseteq \test{g}{\ft{f}{\test{f}{\lambda}}} = \test{g}{\ft{g}{\test{f}{\lambda}}} \subseteq \test{f}{\lambda},\] and the final containment follows from Proposition \ref{F-thresholds: P} \eqref{Fthres1: i}.   The last assertion then follows by interchanging $f$ and $g$ in the first.
\end{proof}

\begin{theorem}\label{ThmSC-Ext}
Given an $F$-finite polynomial ring $R$ with graded maximal ideal $\m$, fix $f \in R$ such that $\sqrt{\Jac(f)} = \m$, and let $S=R_{\m}$. 
If $\ell = \Length_R \quot{R}{\Jac(f)}$ and $N= p^{2\ell+1}(\ell+1)\dim(R)$, then  \[\testR{S}{f}{\lambda} = \testR{S}{(f+h)}{\lambda} \text{ for every $h \in \m^N$ and $0 \leq \lambda < 1$}.\]
\end{theorem}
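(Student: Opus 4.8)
The plan is to deduce this from Lemma~\ref{FTTI: L} by showing that, once $h \in \m^N$ with $N$ as specified, the relevant pairs of $F$-thresholds agree on the nose. Fix $\lambda \in [0,1)$, write $\mathfrak{n} = \m S$, and set $\idealb = \testR{S}{f}{\lambda}$ and $\idealc = \testR{S}{(f+h)}{\lambda}$. By Lemma~\ref{boundSum: L}\eqref{sameset: i}, every $F$-jumping number of $(S,f)$ and of $(S,f+h)$ lies in $\mathscr{A}_{\ell}$ as soon as $h \in \m^{\ell+3}$, which is guaranteed since $N \ge \ell+3$; in particular the $F$-thresholds $\ft{f}{\idealb}$, $\ft{f+h}{\idealb}$, $\ft{f}{\idealc}$, $\ft{f+h}{\idealc}$ all lie in $\mathscr{A}_{\ell}$, using Proposition~\ref{F-thresholds: P} to identify $F$-thresholds with $F$-jumping numbers. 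Then Remark~\ref{LemDifference} (with $B = \ell$) tells us that any two of these that differ, differ by more than $p^{-2\ell}$. So it suffices to bound $|\ft{f}{\idealb} - \ft{f+h}{\idealb}|$ and $|\ft{f}{\idealc} - \ft{f+h}{\idealc}|$ by $p^{-2\ell}$ or less, and Lemma~\ref{FTTI: L} finishes the job.

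The estimate will come from Lemma~\ref{Lem Ineq}, but to apply it I need the auxiliary ideal to be sandwiched $\m^{[p^e]} \subseteq \cdot \subseteq \m$ for a controlled $e$. First I would observe, via Remark~\ref{RemBounds: R}\eqref{bounds3} and Proposition~\ref{commutesWithLocalization: P}, that $\Jac(f)\cdot S \subseteq \idealb$ and (using Lemma~\ref{boundSum: L}\eqref{sameJ: i}, since $N \ge \ell+3$) $\Jac(f)\cdot S = \Jac(f+h)\cdot S \subseteq \idealc$; since $\m^{\ell+1} \subseteq \Jac(f)$, both $\idealb$ and $\idealc$ contain $\mathfrak{n}^{\ell+1}$, and of course both are contained in $\mathfrak{n}$ (as $\lambda < 1$ forces the test ideals to be proper, $f, f+h \in \m$). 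Now I need $\mathfrak{n}^{\ell+1} \supseteq \mathfrak{n}^{[p^e]}$ for a suitable $e$: since $\mathfrak{n}$ is generated by $\dim(R)$ elements, $\mathfrak{n}^{[p^e]} \subseteq \mathfrak{n}^{p^e}$, so any $e$ with $p^e \ge \ell+1$ works; the least such $e$ satisfies $p^e \le p(\ell+1)$. Apply Lemma~\ref{Lem Ineq} with this $e$, with the ideal $\idealb$ (respectively $\idealc$), and with $N = p^{2\ell+1}(\ell+1)\dim(R)$: since $f - (f+h) = -h \in \m^N$, we get
\[ \left| \ft{f}{\idealb} - \ft{f+h}{\idealb} \right| \le \frac{p^e \dim(R)}{N} \le \frac{p(\ell+1)\dim(R)}{p^{2\ell+1}(\ell+1)\dim(R)} = \frac{1}{p^{2\ell}}, \]
and identically with $\idealc$ in place of $\idealb$. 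Combined with the gap statement of Remark~\ref{LemDifference}, this forces $\ft{f}{\idealb} = \ft{f+h}{\idealb}$ and $\ft{f}{\idealc} = \ft{f+h}{\idealc}$.

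With these two equalities of $F$-thresholds in hand, Lemma~\ref{FTTI: L} (applied in $S$, which is $F$-finite regular, with $g = f+h$) yields $\testR{S}{(f+h)}{\lambda} \subseteq \testR{S}{f}{\lambda}$ from the first and the reverse inclusion from the second, hence $\testR{S}{f}{\lambda} = \testR{S}{(f+h)}{\lambda}$. Since $\lambda \in [0,1)$ was arbitrary, this is the claim; the in-particular statement about $F$-jumping numbers follows from Remark~\ref{bijectionJNTI: R} together with Remark~\ref{varyingParameter: R}\eqref{Skoda: i}. The main obstacle I anticipate is purely bookkeeping: making sure the exponent $e$ used in Lemma~\ref{Lem Ineq} is bounded by $p(\ell+1)$ (so that the resulting error is $\le p^{-2\ell}$, which is exactly what Remark~\ref{LemDifference} needs), and confirming that $N \ge \ell+3$ so that Lemma~\ref{boundSum: L} applies and $\idealb, \idealc$ genuinely contain a common Frobenius power of $\mathfrak{n}$ — there is no conceptual difficulty, only the need to track constants carefully and to note that everything localizes compatibly by Proposition~\ref{commutesWithLocalization: P}.
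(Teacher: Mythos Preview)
Your proof is correct and follows the paper's argument essentially step for step: reduce via Lemma~\ref{FTTI: L} to equality of the four $F$-thresholds, place them all in $\mathscr{A}_{\ell}$ using Lemma~\ref{boundSum: L}\eqref{sameset: i} and Proposition~\ref{F-thresholds: P}, bound their pairwise differences by $p^{-2\ell}$ via Lemma~\ref{Lem Ineq} with $e=\lceil \log_p(\ell+1)\rceil$ (so $p^e \le p(\ell+1)$), and conclude with the gap in Remark~\ref{LemDifference}. One small slip to fix: the claim that ``$\lambda<1$ forces the test ideals to be proper'' is false for $\lambda$ below the $F$-pure threshold (where $\idealb$ or $\idealc$ may equal $S$), but in that case the corresponding $F$-thresholds are $0$ by convention and the needed equality is trivial---the paper glosses over this too.
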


\begin{proof}
Fix $\lambda \in [0,1)$ and $h \in \m^N$.  By Lemma \ref{FTTI: L}, to show that $\ideala:=\test{S,f}{\lambda}$ equals $\idealb:= \test{S,(f+h)}{\lambda}$, it suffices to show that 
\begin{equation}
\label{reduction: e}
\ft{f}{\ideala} = \ft{f+h}{\ideala} \text{ and } \ft{f}{\idealb} = \ft{f+h}{\idealb}.
\end{equation}

Toward establishing \eqref{reduction: e}, set $e = \up{\log_p(\ell+1)}$. It follows that $p^e \geq \ell+1$, and, therefore,  $\bracket{\m}{e} \subseteq \m^{p^e} \subseteq \m^{\ell+1} \subseteq \Jac(f)$.  In light of this, Lemma \ref{boundSum: L} \eqref{sameJ: i} implies that \[ \bracket{\m}{e} \cdot S \subseteq \Jac(f) \cdot S = \Jac(f+h) \cdot S, \] and as $\lambda \in [0,1)$, Remark \ref{RemBounds: R} \eqref{bounds3} allows us to conclude that 
\[\bracket{\m}{e} \cdot S \subseteq \Jac(f) \cdot S \subseteq \testR{R}{f}{\lambda} \cdot S = \ideala \ \text{ and } \ \bracket{\m}{e} \cdot S \subseteq \Jac(f+h) \cdot S \subseteq \testR{R}{(f+h)}{\lambda} \cdot S = \idealb.\]
Consequently, setting 
$g = f+h$ in Lemma \ref{Lem Ineq} implies that 
\begin{equation}
\label{differencetoobig2: e}
| \ft{f}{\ideala} - \ft{f+h}{\ideala} |, | \ft{f}{\idealb} - \ft{f+h}{\idealb} | \leq \frac{p^e \cdot \dim(R)}{(\ell+1) \cdot \dim(R) \cdot p^{2 \ell+1}} \leq \frac{1}{p^{2 \ell}}.
\end{equation}

On the other hand, Proposition \ref{F-thresholds: P} and Lemma \ref{boundSum: L} \eqref{sameset: i} imply that all of the $F$-thresholds under consideration are in $\jnset{S,f} \cup \jnset{S,f+h} \subseteq \mathscr{A}_{\ell}$, and setting $B=\ell$ in Remark \ref{LemDifference} then shows that the two values on the left-hand side of \eqref{differencetoobig2: e} must be zero.  We conclude that $\ft{f}{\ideala} = \ft{f+h}{\ideala}$ and $\ft{f}{\idealb} = \ft{f+h}{\idealb}$, which establishes \eqref{reduction: e}, and hence, allows us to conclude our proof.
\end{proof}

\section*{Acknowledgments}
The authors would like to thank Mel Hochster and Felipe P\'erez for  helpful discussions. 
We are grateful for support from the National Science Foundation (NSF) and the National Council of Science and Technology of Mexico (CONOCYT). 
Hern\'andez was partially supported by NSF Postdoctoral Research Fellowship DMS-1304250 and NSF Grant DMS-1600702, 
N\'u\~nez-Betancourt by CONACYT Grant 207063 and NSF Grant DMS-1502282,
and Witt by NSF Grant DMS-1623035 (DMS-1501404 before transfer).  
We would also like to thank the anonymous referee for some useful suggestions.

\bibliographystyle{amsalpha}
\bibliography{References}

\providecommand{\bysame}{\leavevmode\hbox to3em{\hrulefill}\thinspace}
\providecommand{\MR}{\relax\ifhmode\unskip\space\fi MR }
\providecommand{\MRhref}[2]{%
  \href{http://www.ams.org/mathscinet-getitem?mr=#1}{#2}
}
\providecommand{\href}[2]{#2}
\begin{thebibliography}{HNBWZ16}

\bibitem[{\`A}MBL05]{AMBL}
Josep {\`A}lvarez~Montaner, Manuel Blickle, and Gennady Lyubeznik,
  \emph{Generators of {$D$}-modules in positive characteristic}, Math. Res.
  Lett. \textbf{12} (2005), no.~4, 459--473. \MR{2155224 (2006m:13024)}

\bibitem[BFS13]{BFS2013}
Ang{{\'e}}lica Benito, Eleonore Faber, and Karen~E. Smith, \emph{Measuring
  singularities with {F}robenius: the basics}, Commutative algebra, Springer,
  New York, 2013, pp.~57--97. \MR{3051371}

\bibitem[BK14]{Alg14}
Alberto~F. Boix and Mordechai Katzman, \emph{An algorithm for producing
  {$F$}-pure ideals}, Arch. Math. (Basel) \textbf{103} (2014), no.~5, 421--433.
  \MR{3281290}

\bibitem[BMS08]{BMS-MMJ}
Manuel Blickle, Mircea Musta{\c{t}}{\v{a}}, and Karen~E. Smith,
  \emph{Discreteness and rationality of {$F$}-thresholds}, Michigan Math. J.
  \textbf{57} (2008), 43--61, Special volume in honor of Melvin Hochster.
  \MR{2492440 (2010c:13003)}

\bibitem[BMS09]{BMS-Hyp}
Manuel Blickle, Mircea Musta{\c{t}}{\u{a}}, and Karen~E. Smith,
  \emph{{$F$}-thresholds of hypersurfaces}, Trans. Amer. Math. Soc.
  \textbf{361} (2009), no.~12, 6549--6565. \MR{2538604 (2011a:13006)}

\bibitem[dFEM10]{dfEM2010}
Tommaso de~Fernex, Lawrence Ein, and Mircea Musta{\c{t}}{\u{a}},
  \emph{Shokurov's {ACC} conjecture for log canonical thresholds on smooth
  varieties}, Duke Math. J. \textbf{152} (2010), no.~1, 93--114. \MR{2643057
  (2011c:14036)}

\bibitem[dFEM11]{dfEM2011}
\bysame, \emph{Log canonical thresholds on varieties with bounded
  singularities}, Classification of algebraic varieties, EMS Ser. Congr. Rep.,
  Eur. Math. Soc., Z{\"u}rich, 2011, pp.~221--257. \MR{2779474 (2012f:14020)}

\bibitem[Her14]{Binomials}
Daniel~J. Hern{\'a}ndez, \emph{{$F$}-pure thresholds of binomial
  hypersurfaces}, Proc. Amer. Math. Soc. \textbf{142} (2014), no.~7,
  2227--2242. \MR{3195749}

\bibitem[Her15]{Diagonals}
Daniel~J. Hern\'andez, \emph{{$F$}-invariants of diagonal hypersurfaces}, Proc.
  Amer. Math. Soc. \textbf{143} (2015), no.~1, 87--104. \MR{3272734}

\bibitem[HH90]{HoHu1}
Melvin Hochster and Craig Huneke, \emph{Tight closure, invariant theory, and
  the {B}rian\c con-{S}koda theorem}, J. Amer. Math. Soc. \textbf{3} (1990),
  no.~1, 31--116. \MR{1017784 (91g:13010)}

\bibitem[HH94]{HoHu2}
\bysame, \emph{{$F$}-regularity, test elements, and smooth base change}, Trans.
  Amer. Math. Soc. \textbf{346} (1994), no.~1, 1--62. \MR{1273534 (95d:13007)}

\bibitem[HMX14]{ACCLCT}
Christopher~D. Hacon, James McKernan, and Chenyang Xu, \emph{A{CC} for log
  canonical thresholds}, Ann. of Math. (2) \textbf{180} (2014), no.~2,
  523--571. \MR{3224718}

\bibitem[HNBWZ16]{QuasiHomog}
Daniel~J. Hern\'andez, Luis {N\'{u}\~{n}ez}-Betancourt, Emily~E. Witt, and
  Wenliang Zhang, \emph{{$F$}-pure thresholds of homogeneous polynomials},
  Michigan Math. J. \textbf{65} (2016), no.~1, 57--87. \MR{3466816}

\bibitem[HY03]{H-Y}
Nobuo Hara and Ken-Ichi Yoshida, \emph{A generalization of tight closure and
  multiplier ideals}, Trans. Amer. Math. Soc. \textbf{355} (2003), no.~8,
  3143--3174 (electronic). \MR{1974679 (2004i:13003)}

\bibitem[Kat08]{Alg08}
Mordechai Katzman, \emph{Parameter-test-ideals of {C}ohen-{M}acaulay rings},
  Compos. Math. \textbf{144} (2008), no.~4, 933--948. \MR{2441251}

\bibitem[KLZ11]{KLZ}
Mordechai Katzman, Gennady Lyubeznik, and Wenliang Zhang, \emph{An upper bound
  on the number of {$F$}-jumping coefficients of a principal ideal}, Proc.
  Amer. Math. Soc. \textbf{139} (2011), no.~12, 4193--4197. \MR{2823064
  (2012k:13012)}

\bibitem[KS12]{Alg12}
Mordechai Katzman and Karl Schwede, \emph{An algorithm for computing compatibly
  {F}robenius split subvarieties}, J. Symbolic Comput. \textbf{47} (2012),
  no.~8, 996--1008. \MR{2912024}

\bibitem[MTW05]{MTW2005}
Mircea Musta{\c{t}}{\v{a}}, Shunsuke Takagi, and Kei-ichi Watanabe,
  \emph{F-thresholds and {B}ernstein-{S}ato polynomials}, European {C}ongress
  of {M}athematics, Eur. Math. Soc., Z{\"u}rich, 2005, pp.~341--364.
  \MR{MR2185754 (2007b:13010)}

\bibitem[Sho92]{ShokurovACC}
V.~V. Shokurov, \emph{Three-dimensional log perestroikas}, Izv. Ross. Akad.
  Nauk Ser. Mat. \textbf{56} (1992), no.~1, 105--203. \MR{1162635 (93j:14012)}

\bibitem[ST14]{ST-NonPrincipal}
Karl Schwede and Kevin Tucker, \emph{Test ideals of non-principal ideals:
  {C}omputations, jumping numbers, alterations and division theorems}, J. Math.
  Pures Appl. (9) \textbf{102} (2014), no.~5, 891--929. \MR{3271293}

\bibitem[TW04]{TW2004}
Shunsuke Takagi and Kei-ichi Watanabe, \emph{On {F}-pure thresholds}, J.
  Algebra \textbf{282} (2004), no.~1, 278--297. \MR{MR2097584 (2006a:13010)}

\end{thebibliography}
\vspace{.4cm}
\small{
\noindent \textsc{Department of Mathematics, University of Kansas, Lawrence, KS 66045} \\ \indent \emph{Email address}: {\tt hernandez@ku.edu}

\vspace{.3cm}

\noindent \textsc{Centro de Investigaci\'on en Matem\'aticas, Guanajuato, Gto., M\'exico} \\ \indent \emph{Email address}:  {\tt luisnub@cimat.mx} 

\vspace{.3cm}

\noindent \textsc{Department of Mathematics, University of Kansas, Lawrence, KS 66045} \\ \indent \emph{Email address}:  {\tt witt@ku.edu} 
}

\end{document}